\providecommand{\U}[1]{\protect\rule{.1in}{.1in}}
\newtheorem{theorem}{Theorem}
\newtheorem{corollary}[theorem]{Corollary}
\newtheorem{definition}[theorem]{Definition}
\newtheorem{lemma}[theorem]{Lemma}
\newtheorem{remark}[theorem]{Remark}
\newenvironment{proof}[1][Proof]{\noindent\textbf{#1.} }{\ \rule{0.5em}{0.5em}}
\numberwithin{equation}{section}
\begin{document}

\title{Long-time behavior of a nonlocal Cahn-Hilliard equation with reaction 
}
\author{{Annalisa Iuorio\thanks{ Vienna University of Technology, Institute for
Analysis and Scientific Computing, Wiedner Hauptstra{\ss}e 8-10, 1040 Vienna, Austria. $\qquad$ E-mail:
\textit{annalisa.iuorio@tuwien.ac.at}}, Stefano Melchionna\thanks{ University of Vienna, Faculty of Mathematics,
Oskar-Morgenstern-Platz 1, 1090 Vienna, Austria. $\qquad$ E-mail:
\textit{stefano.melchionna@univie.ac.at}} } { }}
\maketitle

\begin{abstract}
In this paper we study the long-time behavior of a nonlocal Cahn-Hilliard
system with singular potential, degenerate mobility, and a reaction term. In particular, we prove the existence
of a global attractor with finite fractal dimension, the existence of an exponential attractor, and convergence to
equilibria for two physically relevant classes of reaction terms.

\end{abstract}

\textbf{Keywords: } Nonlocal Cahn-Hilliard equation, global attractor, exponential attractor, convergence to equilibria 

\textbf{MSC: } 37L30, 45K05.


\section{Introduction}

In this paper we aim to study the long-time
behavior of a nonlocal Cahn-Hilliard system with singular potential, 
degenerate mobility, and a reaction term given by
\begin{subequations}
\label{nonlocCH}
\begin{align}
\frac{\partial u}{\partial t}-\nabla\cdot(\mu(u) \nabla v) & =g(u)\text{ in
}Q\text{,}\label{nonloc CH 1}\\
v & =f^{\prime}(u)+w\text{ in }Q\text{,}\label{nonloc CH 2}\\
w(x,t) & =\int_{\Omega}K(\left\vert  x-y\right\vert  )(1-2u(y,t))\mathrm{d}
y \ \text{ for }(x,t)\in Q\text{,}\label{nonloc CH 3}\\
n\cdot\mu\nabla v & =0~\ \text{on }\Gamma,\label{bc}\\
u(x,0) & =u_{0}(x)\text{, }x\in\Omega\text{,} \label{IC}\\%
f(u) &= u\log u+(1-u)\log(1-u)\text{,}\label{def f}\\
\mu(u) &= \frac{1}{f''(u)} = u(1-u)\text{.}\label{def mu}%
\end{align}
\end{subequations}
where the spatial domain $\Omega\subset\mathbb{R}^{d}$, $d \leq 3$, is assumed to be bounded and with Lipschitz boundary $\partial\Omega$,
$Q=\Omega\times(0,+\infty)$, $\Gamma=\partial\Omega\times(0,+\infty)$ and $n$
denotes the outward normal unit vector on $\partial\Omega$.

The Cahn-Hilliard equation \cite{Ca, CH} arises in the context of phase transitions. These are
defined as the changes of a system from one regime or state to another exhibiting
very different properties. Two types of models have been traditionally adopted in the literature to
describe their occurrence: \textit{sharp-interface} and \textit{phase-field} models.
The first ones describe the interface between two components in a
mixture (e.g., liquid/solid or two chemical species) as a $(d-1)$-dimensional
hypersurface.
\\
\noindent
In phase-field models the sharp interface is replaced by a thin transition region in
which a mixture of the two components is present. The main unknown is here
represented by a real valued function $u$ which describes the local concentration of
one of the two components. In contrast with sharp-interface models,
$u$ is allowed to vary continuously in the interval between the pure concentration values
(say $0$ and $1$). This approach allows us to avoid enforcing complicated boundary conditions
across the interface as well as being concerned with regularity issues.

The Cahn-Hilliard equation has been originally introduced as a phase-field model 
to study the phenomena of \textit{spinodal decomposition} (loss of
mixture homogeneity and formation at a fine scale of pure phase regions) and
\textit{coarsening} (aggregation of pure phase regions into larger domains) in a binary alloy. The original
model is a gradient-flow (in the $H^{-1}$ metric) of the free energy functional given by~\cite{CH} 
\begin{equation}
\label{eq:enfunCH}
E_{CH}(u)=\int_{\Omega}\frac{\tau^{2}}{2}|\nabla u|^{2}+F(u),%
\end{equation}
where $\tau$ is a small positive parameter related to the transition region thickness and $F$ is a double well
potential attaining its two global minima in correspondence of the pure phases (represented here by the values $0$
and $1$). The related evolution problem is then given by the $H^{-1}$-gradient flow associated with the energy
functional
\eqref{eq:enfunCH}%
\begin{align*}
\frac{\partial u}{\partial t}+\nabla\cdot J_{CH} & =0\text{,}\\
J_{CH} & =-\mu(u)\nabla v_{CH}\text{,}\\
v_{CH} & =\frac{\delta E_{CH}(u)}{\delta u}=F^{\prime}(u)-\tau^{2}\Delta
u\text{,}%
\end{align*}
where the function $\mu(u)$ is known as mobility.

\noindent
Though quite successful and largely studied, the Cahn-Hilliard model cannot be rigorously derived as a macroscopic limit of microscopic models of interacting particles. Considering the hydrodynamic limit of such a microscopic model, Giacomin and Lebowitz \cite{GL} derived a nonlocal energy functional of the form
\begin{equation}
\label{eq:enfunGL}
E(u)=\int_{\Omega}\int_{\Omega}\tilde{K}(x-y)(u(x)-u(y))^{2}\mathrm{d}%
x\mathrm{d}y+\eta\int_{\Omega}F(u(x))\mathrm{d}x,
\end{equation}
where $\tilde{K}$ is a positive and symmetric convolution kernel and $\eta$ is a positive parameter. The associated evolution problem is a
\textit{nonlocal} variant of the Cahn-Hilliard system%
\begin{align*}
\frac{\partial u}{\partial t}+\nabla\cdot J & =0\text{,}\\
J & =-\mu(u)\nabla v\text{,}\\
v & =\frac{\delta E(u)}{\delta u}=\eta F^{\prime}(u)+\bar{k}(x)u-\tilde
{K}\ast u\text{,}%
\end{align*}
where $\bar{k}(x):=\int_{\Omega}\tilde{K}(x-y)\mathrm{d}y$. We note that (under suitable choices of $F$, $\eta$, and
$\tilde{K}$ \cite{GZ}) the system can be rewritten in the form%
\begin{subequations}
\label{nl ch no reaction}
\begin{align}
\frac{\partial u}{\partial t}-\nabla\cdot(\mu\nabla v) & =0\text{,} \\
v & =f^{\prime}(u)+w\text{,}\\
w(x,t) & =\int_{\Omega}K(x-y)(1-2u(y,t))\mathrm{d}y.
\end{align}
\end{subequations}
Here $K$ is again a symmetric positive convolution kernel and the potential $f$ is convex. 

Local and nonlocal systems (along with several variants) have been widely studied in the last years from different
prospectives, e.g., well posedness \cite{BH, EG, GZ, DPG, MR}, qualitative properties \cite{LP2, CMZ},
numerical aspects~\cite{GRW,GWW}, applications \cite{BEG, BO, KS}, long-time behavior
\cite{LP1, AW, GG, Mi, CMZ14}, and asymptotics \cite{AB, MM, GNS, LR, Le}, just to
mention a few.
At first glance, the local and nonlocal equations seem to differ considerably:
the first one is a fourth-order PDE, while the nonlocal one is an integro-differential
parabolic equation. Nevertheless, many fundamental features are shared by the two systems,
such as the gradient flow structure, the lack of comparison principles, and
the separation of the solution from the pure phases \cite{LP2, CMZ}.
Moreover, formal computations show that, with suitably
choices of convolution kernels, the nonlocal energy functional $E$ converges to the local one $E_{CH}$\footnote{The
local term $\tau |\nabla u|^2$ can be obtained as the formal limit of the corresponding nonlocal term
with kernel $K(x,y)=m^{d+2}J(|m(x-y)|^2)$
as $m \to \infty$, where $J$ is a nonnegative function with compact support.}.
In addition, the two energy functionals admit the same $\Gamma$-limit for vanishing interface thickness \cite{MM, AB} (see also \cite{Le, GNS} for the
sharp interface limit of the local Cahn-Hilliard equation).

Including a reaction term can be crucially significant in applications;
some examples are image processing \cite{BEG, CFM}, chemistry \cite{BO,Mi},
and biological models \cite{KS,CMZ14,Fa}. We note that its introduction in the equations 
sensibly influences their properties. It is well known that solutions of
the Cahn-Hilliard system without reaction (i.e. $g=0$) entail conservation of the total mass
($\frac{\mathrm{d}}{\mathrm{d}t}\int_{\Omega}u=0$) and separate uniformly from
the pure phases, i.e., $k\leq u(t)\leq1-k$ for all $t\geq t_{0}$ for some
positive $k=k(t_{0})\ $independent of $t$. These properties are no longer
satisfied in the case $g\neq0$, as a trivial consequence of the relation
$\frac{\mathrm{d}}{\mathrm{d}t}\int_{\Omega}u=\int_{\Omega}g(u)$. Moreover,
the system with reaction loses its gradient-flow structure, or, in other
words, does not admit (to the best of the authors' knowledge) a Lyapunov
functional. Finally, uniqueness of stationary solutions is lost.
Consequently, the study of the asymptotic behavior of solutions
to \eqref{nonlocCH} seems to be of particular interest.

Although the non-local Cahn-Hilliard equation is often simplified in the literature by considering a regular (polynomial)
potential and a nondegenerate (i.e. bounded away from zero) mobility,
here, as in~\cite{MR, GG2}, we consider
a \textit{singular (logarithmic)
potential} and a \textit{degenerate mobility}.
This setting is indeed more physically feasible relevant for it allows us to prove that the solution $u$
remains bounded between the pure phases $0$ and $1$, i.e.
$0\leq u(t)\leq1$ for all $t\geq0$. Due to the lack of a comparison principle for both local and
nonlocal Cahn-Hilliard equations, this property cannot be proved (and in
general does not hold true) in the case of regular potential and nondegenerate
mobility.

As a consequence of~\eqref{def mu}, system~\eqref{nonloc CH 1}-\eqref{nonloc CH 2} can be rewritten as a
nonlocal perturbation of a parabolic equation
\begin{equation}\tag{\ref{nonloc CH 1}*}
\label{nonloc CH 1.2}
\frac{\partial u}{\partial t}-\Delta u-\nabla\cdot(\mu\nabla w)= g(u) \text{.}%
\end{equation}
\noindent
Existence, uniqueness, regularity, and separation from pure phases have been already proved in \cite{MR}. Here, we
address questions concerning the asymptotic behavior of solutions. In particular, we first prove the existence of the
global attractor with finite fractal dimension. Let us recall that
the global attractor $\mathcal{A}$ describes the asymptotic dynamics of the system. In simple words,
every trajectory will be eventually close to the set $\mathcal{A}$. Knowing that $\mathcal{A}$ has finite dimension
is thus relevant as it implies that a finite number of variables can describe the long-time dynamics of the system
with good approximation. This is fundamental, e.g., in numerical analysis.\\
\noindent
Secondly, we show the existence of an exponential attractor $\mathcal{A}^{\prime }$. 
Even though uniqueness and invariance properties cannot be proved in this case, we gain an important information
about convergence, which was lacking for the global attractor. \\
\noindent
Finally, we prove existence of equilibria, i.e., weak solutions to the equation
\[
-\nabla\cdot(\mu\nabla v)=g(u), \qquad 0\leq u\leq1.
\]
Most importantly, we get convergence results to steady states for specific
classes of reaction terms $g$. 
In the literature there exist several results about convergence to
equilibria for the reaction-free Cahn-Hilliard equation. Most of
these results (cf. \cite{AW} for the local case and \cite{LP1, LP2}
for the nonlocal case) make use of the gradient-flow structure of
the equation. More precisely, the existence of a Lyapunov functional
is crucial to prove convergence to equilibria, for instance by using
a technique based on the {\L}ojasiewicz-Simon theorem \cite{FIP}. In our
case, however, the existence of a Lyapunov-type functional is
unknown. Therefore, traditionally used techniques cannot be applied
in our setting.
We obtain convergence results for reaction terms $g$ with a definite sign
(namely, $g\geq0$ or $g\leq0$) or strictly monotone decreasing. In
the first case, we observe that the total mass is monotone as a
function of time. Combined with the boundedness of $u$, this proves
the convergence of the solution to one of the pure phases.
In the second case we exploit a linearization technique around the
equilibrium point. We emphasize that these choices include a
wide spectrum of reaction terms particularly relevant in
applications.

\textbf{Plan of the paper.} The paper is structured as follows. 
In Section 2 we present the assumptions, revise some preliminary results,
and state the main theorems. Section
3 is devoted to a detailed proof of the existence and finite fractal
dimension of the global attractor. Existence of an exponential
attractor is illustrated in Section 4. Finally, existence and convergence
to steady states are explored in Section 5, where the two
different reaction-term scenarios are analyzed separately.


\section{Assumptions and main results}


\subsection{Assumptions}

We assume that the given functions $K$, $u_0$ and $g$ fulfill the following conditions:

\begin{description}
\item[(K)] The convolution kernel $K:%
\mathbb{R}
^{d}\rightarrow%
\mathbb{R}
$ satisfies
\begin{equation}
K(x)=K(-x)\text{ for a.a. }x\in%
\mathbb{R}^d \tag{K1}
\text{,} \label{K1}%
\end{equation}%
\begin{equation}
\sup_{x\in\Omega}\int_{\Omega}|K(x-y)|\mathrm{d}y<+\infty\text{,} \label{K2} \tag{K2}
\end{equation}%
\begin{equation}
\forall p\in\lbrack1,+\infty]~\exists r_{p}>0\text{ such that } \Vert K\ast
\rho\Vert _{W^{1,p}(\Omega)}\leq r_{p}\Vert \rho\Vert _{L^{p}(\Omega)}~\forall\rho\in
L^{p}(\Omega)\text{,} \label{K3} \tag{K3}
\end{equation}%
\begin{equation}
\exists C>0\text{ such that }\Vert K\ast\rho\Vert _{H^{2}(\Omega)}\leq C\Vert \rho
\Vert _{H^{1}(\Omega)}~\forall\rho\in H^{1}(\Omega)\text{;} \label{K4} \tag{K4}
\end{equation}

\item[(U0)] The initial datum $u_{0}\in L^{\infty}(\Omega)$ is such that
\begin{equation}
 0 \leq u_{0}(x) \leq 1 \ \ \text{for a.a.}\ x \in \Omega \text{,} \tag{U01}
\end{equation}
\begin{equation}
 0<\bar{u}_{0}<1 \text{,}  \tag{U02}
\end{equation}
where we use the notation $\bar{z}= \frac{1}{|\Omega|} \int_\Omega z$;

\item[(G)] The reaction term $g:(x,s)\in\Omega\times\lbrack0,1]\longmapsto
g(x,s)\in%
\mathbb{R}
$ is such that
\begin{equation}
g\  \text{is measurable in $x$ and uniformly $C^{1,1}$ in $s$} \text{,} \label{G1} \tag{G1}
\end{equation}
\begin{equation}
g(x,0) \geq 0 \geq g(x,1)\ \ \text{for a.a.}\ x \in \Omega. \label{G2} \tag{G2}%
\end{equation}
\end{description}
Finally, we recall that $f(u) = u\log u+(1-u)\log(1-u)$ and  $\mu(u) = u(1-u)$. In particular, $\mu(u)= 1/f''(u)$.

Examples of convolution kernels $K$ satisfying conditions (K)
are given by 
\begin{itemize}
\item Gaussian kernels: $K(\left\vert  x\right\vert 
)=C\exp(-\left\vert  x\right\vert  ^{2}/\lambda)$,
\item Mollifiers:
$ K(x)=\left\{
\begin{array}
[c]{ll}%
C\exp(\frac{-h^{2}}{h^{2}-|x|^{2}}) & \text{ if }|x|<h,\\
0 & \text{ if }|x|\geq h,
\end{array}
\right.$
\item Newton potentials:
$
\left\{
\begin{array}
[c]{lll}%
K(\left\vert  x\right\vert  )=k_{d}\left\vert  x\right\vert  ^{2-d} & \text{for }
& d>2,\\
K(\left\vert  x\right\vert  )=-k_{2}\ln\left\vert  x\right\vert   & \text{for} &
d=2,
\end{array}
\right.
$
\end{itemize}
where $h,\lambda,k_{d}>0$.

Hypotheses (G) cover a wide range of reaction terms occurring in applications; some relevant examples are given by
\begin{align}
g(x,u)  &  =\alpha(x)u(1-u)\text{,}\label{bio react term}\\
g(x,u)  &  =\beta(x)(h(x)-u)\text{,}\label{chb}\\
g(x,u)  &  =-\sigma(x)u\text{,} \label{cho}%
\end{align}
where $\alpha,\beta,h,\sigma\in L^{\infty}(\Omega)$ are positive functions and
$h\leq1$ a.e. in $\Omega$. The function defined by (\ref{bio react term}) is a space-dependent
logistic map largely employed in population dynamics (cf. \cite{KS}). Equation~\eqref{nonlocCH} together with \eqref{chb} is known as the Cahn-Hilliard-Bertozzi equation and is used in image inpainting  \cite{BEG}. Finally, equation~\eqref{nonlocCH}
with \eqref{cho}, i.e. the so-called Cahn-Hilliard-Oono equation, was introduced in the context of diblock copolymers \cite{BO, Mi}.
%
\subsection{Preliminaries}

In our work we deal with weak solutions to equation~\eqref{nonlocCH}. This notion is made precise by the following definition:

\begin{definition}
\label{def sol} Let assumptions \emph{(K), (U0), (G)} be satisfied. $u$ is a \emph{weak solution} to \emph{\eqref{nonloc CH 1}%
-\eqref{IC}} on $[0,+\infty)$ if
\begin{equation}
 u\in L_{\text{loc}}^2(0,+\infty; H^1(\Omega)) \cap H_{\text{loc}}^1(0,+\infty;\left(  H^{1}(\Omega)\right)  ^{\ast}),
\end{equation}
\begin{equation}
 0  \leq u \leq 1 \text{ a.e. in } Q,
\end{equation}
\begin{equation}
 w  = K\ast(1-2u) \text{ a.e in } Q,
\end{equation}
\begin{equation}
 w \in C \left( \left[
  0,+\infty \right); W^{1,\infty}(\Omega) \right)
\end{equation}
\begin{equation}
 u(0)  = u_{0} \text{ a.e. in } \Omega \text{,}
\end{equation}
and the following equality is satisfied a.e. in $(0,+\infty)$ and for every $\psi\in
H^{1}(\Omega)$%
\[
\left\langle \dot{u},\psi\right\rangle _{H^{1}(\Omega)}+(\mu(u)\nabla
w,\nabla\psi)_{L^{2}(\Omega)}+(\nabla u,\nabla\psi)_{L^{2}(\Omega)}%
=(g(u),\psi)_{L^{2}(\Omega)}.
\]
\end{definition}

Several important properties of the solutions such as existence, uniqueness, continuous dependence on the initial datum, and
separation from pure phases have been proved in \cite{MR}:

\begin{theorem}[Existence, uniqueness and separation properties,~\cite{MR}]
\label{existence and separation}Let assumptions \emph{(K), (U0), (G)} be satisfied. Then:
\begin{enumerate}
\item[(i)] There exists a unique weak solution $u$ to \emph{\eqref{nonlocCH}};
\item[(ii)] If $u_{1},u_{2}$ are two solutions of \emph{\eqref{nonlocCH}}
corresponding to initial data $u_{01},u_{02}$ respectively, the continuous dependence estimate%
\[
\Vert u_{1}-u_{2}\Vert _{L^{\infty}(0,t;L^{2}(\Omega))}\leq\mathrm{e}^{Ct}\Vert u_{01}%
-u_{02}\Vert _{L^{2}(\Omega)}%
\]
holds true for some constant $C=C(K,\Omega)>0$;
\item[(iii)] For every $0<t_{0}<T$, $u\in L^{\infty}(t_{0},T;H^{2}(\Omega))$;
\item[(iv)] For every $0<t_{0}<T$, there exist $k_{1}=k_{1}(\bar
{u}_{0},t_{0},T)>0$ and $k_{2}=k_{2}(\bar{u}_{0},t_{0},T)>0$ such that
\[
k_{1}\leq u(x,t) \leq1-k_{2}\quad\text{ for a.a.}~x\in\Omega\text{ and }%
t_0 \leq t \leq T;%
\]
\item[(v)] If $g\geq0$ a.e.~in $\Omega\times\lbrack0,1],$ $k_{1}$ is
independent of $T$. Similarly, if $g\leq0$ a.e. in $\Omega\times\lbrack0,1],$
$k_{2}$ is independent of $T.$
\end{enumerate}
\end{theorem}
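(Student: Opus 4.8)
The plan is to establish the five assertions in the order listed, since the regularity and separation statements rest on the bounds obtained for existence. Because $\mu=1/f''$ by \eqref{def mu}, the cross term $\mu\nabla f'(u)$ collapses to $\nabla u$ and the system reduces to the semilinear nonlocal parabolic equation \eqref{nonloc CH 1.2}, whose principal part $-\Delta$ is non-degenerate. I would first regularize the singular potential: replace $f''$ by a globally defined $f_\varepsilon''$ with $4\le f_\varepsilon''\le1/\varepsilon$ (cap $f''$ at $1/\varepsilon$ near the endpoints and extend it constantly off $(0,1)$), set $\mu_\varepsilon=1/f_\varepsilon''\in[\varepsilon,1/4]$, and extend $g$ to $\Omega\times\mathbb{R}$ by a Lipschitz map; the regularized problem $\partial_t u_\varepsilon-\Delta u_\varepsilon-\nabla\cdot(\mu_\varepsilon(u_\varepsilon)\nabla w_\varepsilon)=g(u_\varepsilon)$, $w_\varepsilon=K\ast(1-2u_\varepsilon)$, is then solvable by Faedo--Galerkin. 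The key uniform estimate comes from the regularized energy $E_\varepsilon(u)=\int_\Omega f_\varepsilon(u)-\tfrac14\int_\Omega w(1-2u)$: testing the weak equation with $v_\varepsilon=f_\varepsilon'(u_\varepsilon)+w_\varepsilon$ gives $\tfrac{d}{dt}E_\varepsilon(u_\varepsilon)+\int_\Omega\mu_\varepsilon|\nabla v_\varepsilon|^2=\int_\Omega g(u_\varepsilon)v_\varepsilon$, whose right-hand side stays bounded thanks to the control of $w_\varepsilon$ from conditions (K2)--(K3) and to the fact that (G1)--(G2) force $s\mapsto g(x,s)f'(s)$ to remain bounded as $s$ tends to $\{0,1\}$ --- this is precisely where the sign condition $g(x,0)\ge0\ge g(x,1)$ enters. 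Since $\mu_\varepsilon f_\varepsilon''\equiv1$ one has $\nabla u_\varepsilon=\mu_\varepsilon(\nabla v_\varepsilon-\nabla w_\varepsilon)$, so the dissipation turns into a bound for $u_\varepsilon$ in $L^2_{\mathrm{loc}}(0,\infty;H^1)$ and then for $\partial_t u_\varepsilon$ in $L^2_{\mathrm{loc}}(0,\infty;(H^1)^\ast)$; Aubin--Lions compactness plus continuity of the nonlinearities produce a limit $u$ solving the equation, with $w\in C([0,\infty);W^{1,\infty})$ following from (K2)--(K4). The constraint $0\le u\le1$ a.e. is recovered because $\int_\Omega f_\varepsilon(u_\varepsilon(t))$ stays uniformly bounded while $f_\varepsilon$ grows quadratically with coefficient $\sim1/\varepsilon$ outside $[0,1]$, forcing $(u-1)_+$ and $u_-$ to vanish in the limit.

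For uniqueness and the continuous-dependence estimate (i)--(ii), set $u=u_1-u_2$ for two solutions with data $u_{01},u_{02}$ and test the difference of \eqref{nonloc CH 1.2} with $u$ itself. Splitting $\mu(u_1)\nabla w_1-\mu(u_2)\nabla w_2=(\mu(u_1)-\mu(u_2))\nabla w_1+\mu(u_2)\nabla(w_1-w_2)$ and using that $\mu$ and $g$ are Lipschitz on $[0,1]$, that $\nabla w_1\in L^\infty$ is bounded and $\|\nabla(w_1-w_2)\|_{L^2}\le C\|u\|_{L^2}$ by (K3), and that $\mu(u_2)\le1/4$, a Young inequality absorbs $\|\nabla u\|_{L^2}^2$ and leaves $\tfrac{d}{dt}\|u\|_{L^2}^2\le C\|u\|_{L^2}^2$; Gronwall's lemma then yields both uniqueness and the stated exponential estimate.

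For the regularity (iii) and the separation (iv)--(v), I would first bootstrap: knowing $0\le u\le1$, write $\nabla\cdot(\mu(u)\nabla w)=\mu'(u)\nabla u\cdot\nabla w+\mu(u)\Delta w$, so that (K3)--(K4) together with $u\in L^2_{\mathrm{loc}}(0,\infty;H^1)$ place the right-hand side of \eqref{nonloc CH 1.2} in $L^2_{\mathrm{loc}}$; parabolic regularity then gives $u\in L^2(t_0,T;H^2)\cap C([t_0,T];H^1)$, and a second bootstrap (or differentiating the equation in time) upgrades this to $u\in L^\infty(t_0,T;H^2)$, which embeds into $C(\overline\Omega\times[t_0,T])$ for $d\le3$. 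For the separation, I would run a De Giorgi/Stampacchia iteration on the super-level sets of $f'(u)$: testing \eqref{nonloc CH 1.2} with $(f'(u)-M)_+$ and exploiting $\mu f''\equiv1$ to rewrite the nonlocal cross term as $\int_{\{f'(u)>M\}}\nabla w\cdot\nabla u$, which is absorbed into the dissipation $\int_{\{f'(u)>M\}}f''(u)|\nabla u|^2$ because $f''$ is large on that set, while the reaction contribution is controlled via $(1-u)f'(u)\to0$ as $u\to1$ using $g(x,1)\le0$; iterating produces an upper bound for $f'(u(t))$ on $[t_0,T]$, i.e. $u\le1-k_2$, and symmetrically $u\ge k_1$. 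Finally, when $g\ge0$ one has $\tfrac{d}{dt}\int_\Omega u=\int_\Omega g(u)\ge0$, hence $\bar u(t)\ge\bar u_0$ for all $t$; since the constant $k_1$ produced by the iteration depends on the time interval only through a lower bound for $\bar u(t)$, it can be chosen independent of $T$, and symmetrically for $g\le0$ and $k_2$.

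The delicate step is the separation estimate (iv): with a singular potential, a degenerate mobility, and no comparison principle available, barrier arguments are ruled out, so the argument hinges entirely on choosing the right truncation $(f'(u)-M)_+$, on the algebraic identity $\mu f''\equiv1$ making the iteration close, and on the boundary sign condition (G2) --- which is exactly what keeps the reaction term from destroying the estimate near the pure phases.
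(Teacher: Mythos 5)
A preliminary remark: the paper itself contains no proof of this theorem — it is imported wholesale from \cite{MR} — so there is no in-paper argument to compare against, and your proposal has to be judged against the standard route in the literature (Gajewski--Zacharias, Londen--Petzelt\'ov\'a, and \cite{MR} itself). In broad outline you do follow that route: regularization of the logarithmic potential and of $\mu$, Galerkin plus an energy estimate obtained by testing with the (regularized) chemical potential, recovery of $0\le u\le 1$ from the blow-up of $f_\varepsilon$ outside $[0,1]$, Gronwall for uniqueness and continuous dependence (your computation for (ii) is exactly the one the paper later quotes from \cite[Sec.~3.1]{MR}), a parabolic bootstrap for (iii), and an iteration giving $L^\infty$ control of $f'(u)$ for the separation, with the sign condition (G2) taming the reaction near the pure phases.

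The genuine gap is in the separation step (iv), which you yourself call the crux. Testing \eqref{nonloc CH 1.2} with $(f'(u)-M)_+$ and using $\mu f''\equiv 1$ leaves you, after Young, with the dissipation $\int_{\{f'(u)>M\}} f''(u)\,|\nabla u|^2$. But a De Giorgi iteration on the levels $M$ requires a Sobolev inequality for the truncation $z_M=(f'(u)-M)_+$, i.e.\ control of $\int|\nabla z_M|^2=\int_{\{f'(u)>M\}} f''(u)^2|\nabla u|^2$, and on those level sets $f''(u)$ is of order $e^{M}$, so this is \emph{not} dominated by the available dissipation: the scheme as written does not close. The way this is actually done is a Moser/Alikakos power iteration: test with $f'(u)|f'(u)|^{p-2}$ (or with powers of $v$), use the convex primitive for the time term and $\mu f''\equiv1$ to turn the elliptic term into $(p-1)\int |f'(u)|^{p-2}|\nabla u|^2$, and send $p\to\infty$; the reaction is then handled by the one-sided bound $g(x,s)f'(s)\le C$ near $s\in\{0,1\}$ (note: bounded \emph{above}, not bounded — if $g(x,0)>0$ then $gf'\to-\infty$ near $0$, which is the favourable sign; the same correction applies to your energy estimate). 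Two smaller soft spots: the upgrade from $L^2(t_0,T;H^2)$ to $L^\infty(t_0,T;H^2)$ in (iii) is not a routine ``second bootstrap'' — one differentiates the equation in time (or uses difference quotients) to get $\partial_t u\in L^\infty(t_0,T;L^2)$ and then applies elliptic regularity — and the claimed continuity $w\in C([0,\infty);W^{1,\infty}(\Omega))$ does not follow from (K2)--(K4) and $u\in C([0,\infty);L^2(\Omega))$ alone as stated. With the power iteration in place of the truncation argument, the remainder of your outline, including the monotonicity-of-$\bar u$ reasoning for (v), is the standard and correct strategy.
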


Theorem \ref{existence and separation} allows us to define a dynamical
system $(X,S(t))$, where the state space
\[
X=\{u\in L^{2}(\Omega):0\leq u\leq1 \text{ a.e. in } \Omega\}
\]
is equipped with the $L^2(\Omega)$-topology
and $S(t)$ is the (strongly continuous) solution operator associated with~\eqref{nonlocCH}
at time $t$, i.e., $S(t)u_{0}=u(t)$,
where $u$ is the solution to~\eqref{nonlocCH}.

\subsection{Main results}

Our first result shows the existence of the global attractor for problem~\eqref{nonlocCH}. The notion
of global attractor (see Def.~10.4,~\cite{Ro}) is one of the most discussed and widely used tools
to investigate the long-time behavior of an evolution equation.
The global attractor $\mathcal{A}$ is the maximal compact totally-invariant subset of $X$, i.e., such that
\[
 S(t)\mathcal{A} = \mathcal{A} \hspace{1cm} \text{ for all } t \geq 0,
\]
or equivalently the minimal set attracting all bounded sets $B$, i.e. such that
\begin{equation}
 \text{dist}_\mathcal{H}(S(t)B,\mathcal{A}) \to 0 \hspace{1cm} \text{ as } t \to \infty. \label{attr}
\end{equation}
Here, $\text{dist}_\mathcal{H}$ is the so-called Hausdorff semidistance defined by 
$$
\text{dist}_\mathcal{H}(\mathcal{B},\mathcal{C})=\sup_{b \in \mathcal{B}} \inf_{c \in \mathcal{C}} \Vert b-c \Vert_{L^2(\Omega)},
$$
for all $\mathcal{B}$ and $\mathcal{C}$ nonempty subsets of X.
As a consequence of~\eqref{attr}, one can say that the long-time dynamics of the system is well described by the
dynamics close to the attractor. 
Although the existence of the global attractor is already a valuable information, often the set $\mathcal{A}$
is not explicitly described and (though compact) might be large and difficult to characterize. This motivates us in proving that 
the global attractor has finite fractal dimension.\\
\noindent
The notion of fractal dimension is a generalization of the standard notion of Hausdorff dimension. Following
(Def.~13.1,~\cite{Ro}) we define
\begin{definition}[Fractal dimension] \label{deffracdim}
If $\bar{Y}$ is compact in a Hilbert space $B$, the \emph{fractal dimension} of $Y$
with respect to the topology of $B$ is defined as
\begin{equation}
\label{fracdim}
d_{\mathrm{frac}}^B(Y) = \underset{\varepsilon \to 0}{\operatorname{limsup}} \frac{\log N(Y, \varepsilon)}{\log(1/\varepsilon)},
\end{equation}
where $N(Y,\varepsilon)$ denotes the minimum number of $B$-balls of radius $\varepsilon$ needed to cover $Y$.
\end{definition}
The theorem hence reads as follows:

\begin{theorem}[Global Attractor] \label{thm:ga}
 Let assumptions \emph{(K), (U0), (G)} be satisfied.
 Then, the global attractor for the dynamical system $(X,S(t))$
 \begin{enumerate}
  \item[(i)] exists and is connected;
  \item[(ii)] has finite fractal dimension.
 \end{enumerate}
\end{theorem}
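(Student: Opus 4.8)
The plan is to verify the three standard ingredients that, by the classical theory (see e.g.\ Chapter~10 of~\cite{Ro}), yield the existence of a connected global attractor for the dynamical system $(X,S(t))$: (a) $S(t)$ is a continuous semigroup on $X$; (b) there is a bounded absorbing set $\mathcal{B}_0 \subset X$; (c) $S(t)$ is asymptotically compact (or, more conveniently here, admits a compact absorbing set). Item (a) is immediate from Theorem~\ref{existence and separation}(i)--(ii): the semigroup property follows from uniqueness, and the continuity (indeed Lipschitz continuity on bounded time intervals) in the $L^2(\Omega)$-topology is exactly estimate~(ii). For item (b), the key observation is that $X$ itself is essentially the absorbing set: since $0 \le u \le 1$ a.e.\ in $\Omega$ for every solution, $X$ is bounded in $L^2(\Omega)$ and $S(t)X \subset X$ for all $t \ge 0$, so $X$ is trivially (forward) invariant and bounded. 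The connectedness in~(i) will then follow from the standard fact that the global attractor of a semigroup on a connected space (note $X$ is convex, hence connected) is connected.

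The substantive part is the compactness, and here I would exploit the smoothing estimate Theorem~\ref{existence and separation}(iii): for every $0 < t_0 < T$ one has $u \in L^\infty(t_0,T;H^2(\Omega))$. I would upgrade this to a \emph{uniform-in-time} bound, i.e.\ show that there is a constant $R > 0$, depending only on $K$, $\Omega$, $\bar u_0$ (and possibly on the fixed time $t_0 = 1$, say, but not on $T$), such that $\|S(t)u_0\|_{H^1(\Omega)} \le R$ for all $t \ge 1$ and all $u_0 \in X$. To get this, I would return to the weak formulation in Definition~\ref{def sol}, test with $\psi$ related to $-\Delta u$ (or work with the equivalent form~\eqref{nonloc CH 1.2}), and derive a differential inequality of the form $\frac{d}{dt}\|\nabla u\|_{L^2}^2 + c\|\nabla u\|_{L^2}^2 \le C$, where the right-hand side is controlled using assumptions (K2)--(K4) on the kernel (to bound $\nabla w = \nabla(K\ast(1-2u))$ in terms of $\|u\|$), the boundedness $0\le u\le 1$ (so that $\mu(u) = u(1-u)$ and $g(u)$ are bounded), and the separation property (iv) to handle the degeneracy of $\mu$ near $0$ and $1$ when needed. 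A uniform Gronwall argument then produces the uniform $H^1$ bound, and the compact embedding $H^1(\Omega) \hookrightarrow\hookrightarrow L^2(\Omega)$ (valid for $d \le 3$ on a Lipschitz domain) gives a compact absorbing set in $X$. This yields part~(i).

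For part~(ii), finite fractal dimension, I would apply the standard "method of $\ell$-trajectories" or, more classically, the squeezing/smoothing-property criterion (see Chapter~13 of~\cite{Ro}, or the Efendiev--Miranville--Zelik framework which the authors will in any case need for the exponential attractor in Section~4). The cleanest route is to establish a \emph{smoothing estimate for the difference of two trajectories}: if $u_1, u_2$ are solutions starting from $u_{01}, u_{02} \in \mathcal{B}_0$ (the compact absorbing set), then for some fixed $t^* > 0$,
\[
\|u_1(t^*) - u_2(t^*)\|_{H^1(\Omega)} \le \Lambda\, \|u_{01} - u_{02}\|_{L^2(\Omega)}
\]
for a constant $\Lambda$ depending only on the data and on $\mathcal{B}_0$. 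This is proved by writing the equation for the difference $z = u_1 - u_2$, testing suitably, using the Lipschitz bound (ii) to control $\|z\|_{L^\infty(0,t^*;L^2)}$, and using the separation property (iv) together with (G1) (the $C^{1,1}$ regularity of $g$ in $s$, so that $g(u_1)-g(u_2)$ is Lipschitz) and (K3)--(K4) to bound the nonlocal and reaction contributions. Combined with the compactness of the embedding $H^1 \hookrightarrow\hookrightarrow L^2$, this smoothing-on-the-difference property gives, via the abstract theorem on fractal dimension of attractors (Theorem~13.? in~\cite{Ro}), the finite-dimensionality bound.

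The main obstacle I anticipate is the \emph{uniformity in time} of the $H^1$ (and, for the difference estimate, the careful tracking of constants): the estimates in~\cite{MR} giving (iii) and the separation (iv) are on finite intervals $[t_0,T]$ with constants a priori depending on $T$, so the real work is to redo those energy estimates with the reaction term $g$ — which destroys mass conservation and the gradient-flow/Lyapunov structure — and still close a \emph{dissipative} differential inequality with $T$-independent constants. Assumption (G2), $g(x,0)\ge 0 \ge g(x,1)$, is exactly what keeps $u$ trapped in $[0,1]$, and the boundedness $0\le u\le 1$ is what ultimately makes $g(u)$ and $\mu(u)$ bounded uniformly in time; the degeneracy of $\mu$ at the endpoints is the delicate point where the separation estimate (iv) — or a direct argument avoiding it — must be invoked. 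Everything else is routine functional analysis on a bounded Lipschitz domain in dimension $d\le 3$.
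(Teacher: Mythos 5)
Your part (i) is essentially the paper's argument: boundedness of $X$ gives dissipativity for free, and the whole content is a uniform-in-time $H^1$ bound for $t\geq 1$ producing a compact absorbing set, obtained via a uniform-Gronwall argument. Two concrete corrections, though. First, you do not need (and cannot really use) the separation property (iv) of Theorem~\ref{existence and separation}: its constants $k_1,k_2$ depend on $T$ and on $\bar u_0$, so they are useless for bounds that must be uniform over $t$ and over all of $X$; more importantly, the degeneracy of $\mu$ is harmless here, because $\mu f''\equiv 1$ turns the equation into $\partial_t u-\Delta u-\nabla\cdot(\mu\nabla w)=g(u)$ with a nondegenerate Laplacian and $0\leq\mu\leq 1/4$, which is all the paper uses. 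Second, your proposed route to the dissipative inequality — testing with $-\Delta u$ — hits a boundary term: the natural boundary condition is $n\cdot(\nabla u+\mu\nabla w)=0$, so $\partial_n u=-\mu\,\partial_n w\neq 0$ and $\int_\Omega \dot u\,(-\Delta u)$ is not $\tfrac{d}{dt}\tfrac12\|\nabla u\|_{L^2}^2$. The paper avoids this by testing with $u$ (to get an integrated bound on $\|\nabla u\|_{L^2}^2+\int_\Omega\mu\nabla w\cdot\nabla u$ over unit time intervals) and then with $\dot u$, applying the Uniform Gronwall Lemma (Lemma~\ref{ugl}) to the functional $\tfrac12\|\nabla u\|_{L^2}^2+\int_\Omega\mu\nabla w\cdot\nabla u$; note this does not even require a negative dissipative term in the differential inequality, only the integrated bound.

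For part (ii) you take a genuinely different route. The paper linearizes the semigroup along trajectories on $\mathcal{A}$, proves uniform differentiability and compactness of the linearization (Lemma~\ref{lemma unif dif}, using Gagliardo--Nirenberg, continuous dependence, and the appendix estimates for the linearized equation, and requiring $d<4$), and then bounds the trace of the linearized operator to get $\mathcal{TR}_n(\mathcal{A})<0$ for large $n$ via Theorem~\ref{thfda} and a Lieb--Thirring-type bound; this is the Constantin--Foias--Temam volume-contraction method and is what Chapter~13 of~\cite{Ro} actually contains. Your smoothing-on-differences (squeezing) criterion is not in that chapter but in the Efendiev--Miranville--Zelik framework; it is a legitimate alternative and would simultaneously serve Section~4, but as stated it is only asserted: the pointwise-in-time estimate $\|u_1(t^*)-u_2(t^*)\|_{H^1}\leq\Lambda\|u_{01}-u_{02}\|_{L^2}$ is strictly stronger than what the paper proves even for the exponential attractor (Section~4 only uses space-time smoothing into $L^2(0,T;H^1)\cap H^1(0,T;(H^1)^*)$), and proving it requires uniform $H^2$ bounds on trajectories in the absorbing set plus the same boundary-condition care as above when testing the difference equation with $-\Delta z$. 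So the skeleton of your argument is sound and arguably more robust (no differentiability of $S(t)$ needed, and it yields an exponential attractor as a by-product), but the key smoothing estimate is the crux and is left unproven, whereas the paper's trace computation is carried out in full.
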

We prove Theorem~\ref{thm:ga} in Section~\ref{sec:ga}.

\noindent
The results concerning the global attractor, however, do not give us any indication
about the rate of convergence of the solutions. Hence we investigate the existence of
a finite-dimensional exponential attractor $\mathcal{A}^{\prime }$ for system~\eqref{nonlocCH} with respect to the
$L^2(\Omega)$ metric.
\begin{definition} \label{def:ea}
A compact set $\mathcal{A}^{\prime }$ is called \emph{exponential attractor} with respect to the
$L^2(\Omega)$ metric if there exist some positive constants $c, C>0$ independent of $t$ and $u_0$
such that
\[
d_{L^{2}(\Omega )}(S(t)u_{0},\mathcal{A}) = \sup_{a\in \mathcal{A}}\left\Vert
S(t)u_{0}-a\right\Vert _{L^{2}(\Omega )} \leq
C \mathrm{e}^{-tc}\quad\text{ for
all }u_{0}\in X \text{ and } t \geq 0.
\]%
\end{definition}
\noindent
Our main result concerning exponential convergence is then summarized in the following theorem:
\begin{theorem}[Exponential attractor] \label{thm:ea}
 Let assumptions \emph{(K), (U0), (G)} be satisfied.
 Then, there exists an exponential attractor with respect to the $L^2(\Omega)$-metric
 for the dynamical system $(X,S(t))$.
\end{theorem}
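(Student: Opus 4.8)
The plan is to construct the exponential attractor by verifying the abstract conditions of the standard existence theory (see, e.g., Efendiev--Miranville--Zelik or the smoothing-property approach), which reduce the problem to three ingredients: (a) existence of a compact absorbing set, (b) a suitable smoothing/H\"older estimate for the difference of two trajectories on that set, and (c) Lipschitz (or H\"older) continuity of the map $(t,u_0) \mapsto S(t)u_0$ in time and initial datum. Ingredient (c) is already available: Theorem~\ref{existence and separation}(ii) gives the exponential-in-time Lipschitz dependence in $L^2(\Omega)$, and standard energy estimates yield H\"older continuity in $t$ on the absorbing set. Ingredient (a) is, up to the details, the same uniform dissipative estimate that underlies the global attractor in Theorem~\ref{thm:ga}: using the separation property (Theorem~\ref{existence and separation}(iv)) and the regularizing effect stated in part (iii), one obtains a bounded absorbing set $\mathcal{B}$ in $H^2(\Omega)$ (or at least in $H^1(\Omega)$), whose closure in $L^2(\Omega)$ is compact and which can be taken positively invariant after waiting a fixed time.

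The crux is ingredient (b): the smoothing estimate for differences. Concretely, for two solutions $u_1, u_2$ starting in the invariant absorbing set, I would write the equation for $u = u_1 - u_2$, which by~\eqref{nonloc CH 1.2} reads
\[
\partial_t u - \Delta u = \nabla\cdot\bigl(\mu(u_1)\nabla w_1 - \mu(u_2)\nabla w_2\bigr) + g(u_1) - g(u_2),
\]
with $w_i = K\ast(1-2u_i)$, so that $w_1 - w_2 = -2K\ast u$. The right-hand side is controlled, using (K3)--(K4), the boundedness of $\mu$ and $\mu'$, the separation property (which keeps $\mu(u_i)$ bounded below away from the degenerate values, hence $\nabla w_i$ under control in $L^\infty$ via parabolic regularity on $\mathcal{B}$), and the uniform $C^{1,1}$ bound on $g$. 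Testing with $u$ and then with $-\Delta u$ (or, equivalently, applying the $(-\Delta + 1)^{-1/2}$ trick), one bounds $\|u(t)\|_{H^1}$ on a fixed time interval $[0,T^\ast]$ by $\|u(0)\|_{L^2}$ times a constant depending only on $\mathcal{B}$; integrating in time over $[0,T^\ast]$ gives the decomposition $S(T^\ast) = L + K$ with $K$ a contraction in $L^2$ and $L$ bounded into the compactly embedded space $H^1(\Omega)$, which is exactly the discrete smoothing property needed to build an exponential attractor for the map $S(T^\ast)$. One then passes from the discrete to the continuous exponential attractor in the usual way, taking $\mathcal{A}' = \bigcup_{t\in[0,T^\ast]} S(t)\mathcal{A}'_{\mathrm{disc}}$ and using ingredient (c) to control the fluctuation.

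The main obstacle I anticipate is making the difference estimate genuinely uniform: the degenerate mobility means $\mu(u_i)$ can be small, and the bound on $\nabla w_i$ in strong norms relies on the separation constants $k_1, k_2$ from Theorem~\ref{existence and separation}(iv), which a priori depend on the time window; I would need to invoke part (v), or the eventual uniform dissipativity on the absorbing set, to get separation constants independent of $T$, so that the constant in the $H^1$-smoothing estimate depends only on $\mathcal{B}$ and not on the elapsed time. A secondary technical point is that the nonlinear flux term $\nabla\cdot(\mu(u_1)\nabla w_1 - \mu(u_2)\nabla w_2)$ must be split as $\nabla\cdot\bigl((\mu(u_1)-\mu(u_2))\nabla w_1\bigr) + \nabla\cdot\bigl(\mu(u_2)\nabla(w_1-w_2)\bigr)$; the first piece costs a factor $\|u\|$ (from $|\mu(u_1)-\mu(u_2)|\le C|u|$) paired against $\|\nabla w_1\|_{L^\infty}$, and the second is handled by $\|\nabla(w_1-w_2)\|_{H^1}\le C\|u\|_{L^2}$ via (K4) — both acceptable, but one must be careful that the top-order term $\int \Delta u\,\nabla\cdot(\cdots)$ is absorbed by the good term $\|\Delta u\|^2$ coming from $-\int \Delta u\,\partial_t u$ and $\|\Delta u\|^2$ itself, using Young's inequality. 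Apart from these points the argument is routine and follows the established template.
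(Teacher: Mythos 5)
Your overall architecture is the same Efendiev--Miranville--Zelik template the paper follows (a smoothing-plus-contraction estimate over one fixed time step, a discrete exponential attractor built by iterated coverings, then the discrete-to-continuous step $\mathcal{A}'=\bigcup_{t\in[0,T]}S(t)\mathcal{A}_d$ controlled by continuous dependence), but the smoothing mechanism you choose is different and, as sketched, has genuine gaps. First, your diagnosis of the ``main obstacle'' is off target: uniform-in-time separation from the pure phases is neither available nor needed. Part (v) of Theorem~\ref{existence and separation} applies only when $g$ has a sign, which is not part of assumption (G), and the paper stresses that for $g\neq 0$ separation is \emph{not} uniform in time --- this is exactly why it cannot argue as in \cite{GG2}. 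On the other hand, the difference estimates never require $\mu$ bounded away from zero: the Laplacian in \eqref{nonloc CH 1.2} is already non-degenerate (since $\mu f''=1$), and $\|\nabla w_i\|_{L^{\infty}(\Omega)}\leq r_{\infty}\|1-2u_i\|_{L^{\infty}(\Omega)}\leq r_{\infty}$ follows directly from \eqref{K3} with $p=\infty$ and $0\leq u_i\leq 1$, with no parabolic regularity and no separation involved.

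Second, and more seriously, the instantaneous $L^2\to H^1$ smoothing estimate for differences (your decomposition $S(T^{\ast})=L+K$ with $L$ bounded into $H^1(\Omega)$) does not close with the tools available. Testing the difference equation with $-\Delta u$, $u=u_1-u_2$, forces you to bound $\|\nabla\cdot(\mu(u_1)\nabla w_1-\mu(u_2)\nabla w_2)\|_{L^2(\Omega)}$ by norms of the \emph{difference}: the term $(\mu(u_1)-\mu(u_2))\Delta w_1$ needs $\Delta w_1\in L^3$ or $L^4$ (paired with $\|u\|_{L^6}$ or $\|u\|_{L^4}$), which \eqref{K3}--\eqref{K4} do not provide (they only give $\Delta w_1\in L^2$), while the terms containing $(\mu'(u_1)-\mu'(u_2))\nabla u_1$ require $\|\nabla u_1\|_{L^4}$, i.e.\ $H^2$-bounds on individual trajectories that are uniform over the absorbing set; Theorem~\ref{existence and separation}(iii) gives no such uniformity, and the paper never proves it. The paper's proof deliberately avoids all higher-order estimates: it establishes only \eqref{eq:ast}--\eqref{eq:ast2}, i.e.\ $S(T)$ is a contraction up to the $L^2(0,T;L^2(\Omega))$-norm of the trajectory difference, and the solution map $u_0\mapsto u$ is Lipschitz from $L^2(\Omega)$ into $H^1(0,T;(H^1(\Omega))^{\ast})\cap L^2(0,T;H^1(\Omega))$, both consequences of the elementary energy estimates of \cite{MR}; the compact embedding of the latter space into $L^2(0,T;L^2(\Omega))$ then supplies the smoothing used in the covering argument. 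To salvage your route you would have to either strengthen the kernel hypotheses (e.g.\ a $W^{1,4}\to W^{2,4}$ bound for $K\ast$) and prove uniform $H^2$ bounds on the absorbing set, or switch to the space-time smoothing as the paper does.
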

We remark that analogous results have been already proved in the reaction-free case in~\cite{GG2} (see also~\cite{FG,
FGR} for a system coupled with Navier-Stokes equations).
Theorems \ref{thm:ga} and \ref{thm:ea} can be interpreted as an extension of these results in several directions. First,
we allow
for a nontrivial reaction term. Secondly, the dynamical system considered in~\cite{GG2} has as a state space
the set of functions $0 \leq u \leq 1$ whose spatial average $\bar{u}$ is bounded away from the pure phases
$0$ and $1$ by an arbitrarily small but fixed constant. The reason for this restriction is that uniform (in time)
separation from pure phases ensures better estimates, in particular $L^\infty$ estimates on the chemical
potential, that are used to prove the exponential convergence to the attractor. Moreover, the finite dimension of the
attractor is proved in~\cite{GG2} with respect to a metric weaker than the $L^2$ metric. 
The restriction on $\bar{u}$ is of course unnatural in the case $g \neq 0$ for the quantity $\bar{u}$ is no longer 
preserved in time. This forced us to find a different way to derive estimates. By applying a generalization of the Gronwall
Lemma (see Lemma~\ref{ugl} in the Appendix) we can bound $\nabla u$ in $L^\infty(t_0, \infty, L^2(\Omega))$ (cf. estimate~\eqref{gradient
unif bound}). This strong estimate provides sufficient regularity to obtain the stronger statement of
Theorem~\ref{thm:ga}.

\noindent
We now focus our attention on the steady states of system~\eqref{nonlocCH}. 
These correspond to solutions of the equation
\begin{equation}
\label{equil}
 -\nabla \cdot (\mu\nabla v)=g(u).
\end{equation}
In the reaction-free case (i.e, $g=0$), the uniform-in-time
separation properties proved in~\cite{LP2} allow us to explicitly obtain equilibrium
triples $(u^{\ast},v^{\ast},w^{\ast})$, where
\[
\begin{aligned}
 u^{\ast} &= \frac{1}{1+\exp(w^{\ast}-v^{\ast})}, \hspace{.5cm} \int_{\Omega}u^{\ast}  &  =\int_{\Omega}u_{0},\\
 v^{\ast}  &= const, \\
 w^{\ast}  &= K\ast(1-2u^{\ast}).
\end{aligned}
\]
On the other hand, in the case $g\neq0$ the separation
properties are not uniform in time as already observed in~\cite{MR}.
In particular, the chemical potential $V$ is not well defined.
This suggests to rewrite equation~\eqref{equil}
as
\begin{subequations}
\label{eqeq}
\begin{align}
-\Delta u-\nabla\cdot(\mu\nabla w)  &  =g(u)\text{ in }\Omega\text{,}%
\label{equilibrium equation}\\
n\cdot(\mu\nabla w+\nabla u)  &  =0\text{ on }\partial\Omega\text{,}\\
w  &  =K\ast(1-2u)\text{ in }\Omega. \label{equilibrium equation 3}%
\end{align}
\end{subequations}
\noindent
More precisely, we give the following definition.
\begin{definition}
Let \emph{(K), (U0), (G)} be satisfied. Then, $u$ is
called \emph{equilibrium point} for \emph{\eqref{nonlocCH}} or
\emph{weak solution} to \emph{\eqref{eqeq}} if $u\in H^{1}(\Omega)$, $0\leq u\leq1$ a.e.~in
$\Omega$, and it satisfies%
\begin{equation} \label{eq:equil}
\left(  \nabla u+\mu\nabla w,\nabla\psi\right)  _{L^{2}(\Omega)}=\left(
g(u),\psi\right)  _{L^{2}\left(  \Omega\right)  } \hspace{.5cm} \forall\psi\in H^{1}%
(\Omega)\text{,}%
\end{equation}
where $w$ satisfies \emph{(\ref{equilibrium equation 3})} pointwise a.e. in
$\Omega$.
\end{definition}
\noindent
Differently from the reaction-free case
(i.e., $g=0$), existence of stationary solutions for~\eqref{nonlocCH} is not trivial and has to be proved
directly. Moreover, uniqueness is false in the general case (see Remark~\ref{rem:nue}).
We recall that, as we do not know any Lyapunov functional for system~\eqref{nonlocCH}, convergence to equilibria is
hard to obtain. However, restricting the class of reaction terms $g$ we can prove
convergence to equilibria. More precisely, we consider functions $g$ which either have a sign (see Case $1$ in Theorem~\ref{thm:eq}) or are monotone decreasing (see Case $2$).\\
\noindent
Our result reads as follows.

\begin{theorem}[Equilibria] \label{thm:eq}
 Let assumptions \emph{(K), (U0), (G)} hold.
 Then, we have the following:
 \begin{description}
 \item[Existence.] There exists at least one equilibrium point $u\in H^{1}(\Omega)$ such that $0\leq
u\leq1$ a.e.~in $\Omega$, i.e., a solution of~\eqref{eq:equil}.
 \item[Convergence - Case 1.] If in addition $g$ is such that
 \begin{subequations}
\label{pr9hypA}  
\begin{align}
 & g(x,s)\geq0\ \quad \text{for a.a.~} x \in \Omega \text{ and } \forall s \in [0,1], \label{hA1} \tag{A1}\\
 & \forall \kappa>0\ \exists\hspace{.03cm}m_\kappa>0\ \text{ s.t. } \ g(x,s)\geq-m_\kappa(s-1) \quad 
 \forall s\in\lbrack\kappa,1], \text{ for a.a.~} x \in \Omega, \label{hA2} \tag{A2}
\end{align}
\end{subequations}
then, $u(t)\rightarrow1$ exponentially fast in $L^{2}(\Omega)$ for every $u_{0} \in X$ if the measure of 
the set $\{x \in X \text{ s.t. } g(x,0) > 0\}$ is positive and for
every $u_{0}\neq0$ if $g(\cdot,0) \equiv 0$.
Similarly, if $g$ is such that
\begin{subequations}
\label{pr9hypB}  
 \begin{align}
 & g(x,s)\leq0\ \quad \text{for a.a.~} x \in \Omega \text{ and } \forall s \in [0,1], \tag{B1}\\
 & \forall \kappa>0\ \exists\hspace{.03cm}m_\kappa>0\ \text{ s.t. } \ g(x,s)\leq-m_\kappa s 
 \quad \forall s\in \lbrack0,1-\kappa], \text{ for a.a.~} x \in \Omega, \tag{B2}
\end{align}
\end{subequations}
then $u(t)\rightarrow0$ exponentially fast in $L^{2}(\Omega)$ for every $u_{0} \in X$ if the measure of 
the set $\{x \in X \text{ s.t. } g(x,1) < 0\}$ is positive and for
every $u_{0}\neq1$ if $g(\cdot,1) \equiv 0$.
\item[Convergence - Case 2.] If in addition $g$ is uniformly differentiable in the second variable and
\begin{equation} \label{hC}
\partial_s g(x,s)\leq-\lambda<0 \text{ for a sufficiently large } \lambda=\lambda(\Omega,K) \text{ for
a.a.}~(x,s)\in\Omega\times\lbrack0,1], 
\end{equation}
then there exists a unique equilibrium $u^\ast$ and $u(t)\rightarrow u^{\ast}$ exponentially fast in $L^{2}(\Omega)$
for $t\rightarrow+\infty$.
 \end{description}
\end{theorem}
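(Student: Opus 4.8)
```latex
\textbf{Proof proposal.}
The plan is to treat the three assertions of Theorem~\ref{thm:eq} separately, since they require rather different tools.

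For \textbf{Existence}, I would set up a fixed-point argument. Given a fixed $\tilde u \in X$, the nonlocal term $w = K\ast(1-2\tilde u)$ is a known function in $W^{1,\infty}(\Omega)$ by (K3)--(K4). The equation
\[
-\Delta u - \nabla\cdot(\mu(\tilde u)\nabla \tilde w) = g(\tilde u) \quad\text{in }\Omega,\qquad n\cdot(\mu(\tilde u)\nabla \tilde w + \nabla u)=0 \text{ on }\partial\Omega,
\]
is then a linear elliptic problem for $u$ with a compatibility condition coming from integrating against $\psi\equiv 1$; however $\int_\Omega g(\tilde u)$ need not vanish, so I would instead regard the stationary problem as the long-time limit of the flow and close the argument via Schauder: define the map $T$ by solving, for fixed $\tilde u$, the nonlinear elliptic problem obtained by freezing only $w$ (i.e. solve $-\Delta u - \nabla\cdot(\mu(u)\nabla \tilde w) = g(u)$ together with $v = f'(u)+\tilde w = \text{const}$ and $\int_\Omega u$ determined self-consistently), using monotonicity of $f'$ to get a unique $u = (1+\exp(\tilde w - v))^{-1}$ for each constant $v$, then choose $v$ to satisfy the solvability constraint. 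A priori bounds come from $0\le u\le 1$ and the (K) estimates, compactness from the $H^1$ (indeed $H^2$) regularity of $u$, and Schauder's fixed point theorem in $X$ with the $L^2$ topology yields an equilibrium. The sign condition (G2) guarantees the fixed-point set is nonempty by keeping the relevant integrals controlled. I expect the self-consistent choice of the Lagrange-multiplier constant $v$ to be the delicate point here.

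For \textbf{Convergence -- Case 1}, assume \eqref{hA1}--\eqref{hA2}. Since $g\ge 0$, testing the weak formulation in Definition~\ref{def sol} with $\psi\equiv 1$ gives $\frac{\mathrm d}{\mathrm dt}\int_\Omega u = \int_\Omega g(u) \ge 0$, so $t\mapsto \bar u(t)$ is nondecreasing and bounded above by $1$, hence converges to some $\ell \le 1$. The goal is $\ell = 1$ with an exponential rate. Here I would use Theorem~\ref{existence and separation}(v): when $g\ge 0$ the lower separation constant $k_1$ is uniform in time, so $u(t)\ge k_1 > 0$ for all $t\ge t_0$. Then \eqref{hA2} with $\kappa = k_1$ gives $g(x,u)\ge -m_{k_1}(u-1)$, and testing again with $\psi\equiv 1$ yields
\[
\frac{\mathrm d}{\mathrm dt}\!\int_\Omega (1-u) \le -m_{k_1}\!\int_\Omega (1-u),
\]
so $\int_\Omega(1-u)\to 0$ exponentially; combined with $0\le u\le 1$ this upgrades to $\|1-u\|_{L^2}\to 0$ exponentially, since $\|1-u\|_{L^2}^2 \le \|1-u\|_{L^1}$. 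The positivity hypotheses on $\{g(\cdot,0)>0\}$ (or $u_0\neq 0$ when $g(\cdot,0)\equiv 0$) are exactly what is needed to guarantee $k_1 > 0$ can be taken uniform and that the solution does not sit at the constant $0$; I would verify this by a short argument showing that under these hypotheses $\bar u$ strictly increases off $0$ in finite time, after which uniform separation kicks in. The symmetric statement for $g\le 0$ follows by the substitution $u\mapsto 1-u$ (which, by the symmetry $f(u)=f(1-u)$, $\mu(u)=\mu(1-u)$, and $w\mapsto -w$, maps the system to one of the same form). The main obstacle in this case is handling the threshold behavior near $u\equiv 0$ (resp. $u\equiv 1$) to rule out convergence to the ``wrong'' pure phase.

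For \textbf{Convergence -- Case 2}, assume \eqref{hC} with $\lambda$ large. First, uniqueness of the equilibrium $u^\ast$: if $u_1,u_2$ are two equilibria, subtract their weak formulations \eqref{eq:equil}, test with $\psi = u_1 - u_2$, and use that $-\Delta$ contributes $\|\nabla(u_1-u_2)\|_{L^2}^2\ge 0$, the term $(\mu(u_1)\nabla w_1 - \mu(u_2)\nabla w_2, \nabla(u_1-u_2))$ is controlled by $C(\Omega,K)\|u_1-u_2\|_{L^2}^2$ plus a small multiple of $\|\nabla(u_1-u_2)\|_{L^2}^2$ using (K3)--(K4) and Lipschitzness of $\mu$, while the reaction term gives $(g(u_1)-g(u_2),u_1-u_2)\le -\lambda\|u_1-u_2\|_{L^2}^2$; choosing $\lambda$ larger than the constant from the nonlocal term forces $u_1 = u_2$. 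For convergence of the flow, I would estimate $\frac12\frac{\mathrm d}{\mathrm dt}\|u(t)-u^\ast\|_{L^2}^2$ by testing the difference of the evolution equation (Definition~\ref{def sol}) and the stationary equation \eqref{eq:equil} with $\psi = u(t)-u^\ast$: the $-\Delta$ term is again nonnegative, the nonlocal term is absorbed up to $C\|u-u^\ast\|_{L^2}^2$, and \eqref{hC} contributes $-\lambda\|u-u^\ast\|_{L^2}^2$, so for $\lambda > C(\Omega,K)$ we get $\frac{\mathrm d}{\mathrm dt}\|u-u^\ast\|_{L^2}^2 \le -2(\lambda - C)\|u-u^\ast\|_{L^2}^2$, and Gronwall yields exponential decay. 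The key obstacle here is making precise the constant $C(\Omega,K)$ controlling the nonlocal coupling, which in turn pins down how large $\lambda$ must be; this requires the elliptic regularity estimates (K3)--(K4) together with the boundedness $0\le u\le 1$ (so $\mu$ and $\mu'$ are bounded) and Young's inequality to split off the gradient term.
```
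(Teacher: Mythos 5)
Your treatment of the two convergence cases is essentially the paper's own argument: in Case 1 you use the uniform-in-time lower separation from Theorem~\ref{existence and separation}(iv)--(v), apply \eqref{hA2} with $\kappa=k_1$, test with $\psi\equiv 1$ to get exponential convergence of the mean, and upgrade to $L^2$ via $\|1-u\|_{L^2(\Omega)}^2\le\|1-u\|_{L^1(\Omega)}$ (the paper packages this last step as Lemma~\ref{l2 convergence}); in Case 2 you test the difference of the evolution and stationary equations with $u-u^\ast$, absorb the nonlocal term using (K3) and $0\le u\le 1$, and conclude by Gronwall for $\lambda$ large -- exactly the paper's computation, with the small bonus that you make the uniqueness of $u^\ast$ explicit by subtracting two stationary weak formulations, a point the paper leaves implicit.

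The genuine gap is in the \textbf{Existence} part. You correctly notice that freezing everything at $\tilde u$ produces a Neumann problem with an unsatisfiable compatibility condition, but your proposed fix does not work: you freeze only $w=\tilde w$ and then impose $v=f'(u)+\tilde w=\mathrm{const}$, i.e.\ $u=(1+\exp(\tilde w-v))^{-1}$. A constant chemical potential forces $-\nabla\cdot(\mu(u)\nabla v)=0$, so such a $u$ can never satisfy $-\Delta u-\nabla\cdot(\mu(u)\nabla\tilde w)=g(u)$ unless $g(u)\equiv 0$; the single scalar parameter $v$ can at best enforce one integral ``solvability constraint'', not the pointwise PDE. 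This ansatz is the structure of equilibria in the reaction-free case and is precisely what is lost when $g\neq 0$ (there is no mass constraint and no constant multiplier). Moreover, even your fallback of solving the full nonlinear problem with only $w$ frozen is problematic as a definition of a fixed-point map: for $g\equiv 0$ that problem has a one-parameter family of solutions, so the map is not well defined without additional selection.

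The paper circumvents all of this by regularizing rather than constraining: it adds $\varepsilon u$ to the equation (problem \eqref{eqeqreg}), so that for frozen $z$ the map $\Gamma:z\mapsto u$ solving \eqref{fix point eq} is well defined by Lax--Milgram with no compatibility condition. Schaefer's fixed point theorem (Theorem~\ref{Sch}) then applies: continuity and compactness follow from the (K) assumptions and the Lipschitz bounds on $\mu$, $g$, and the crucial boundedness of the set $\{u=\alpha\Gamma(u)\}$ is obtained by extending $\mu$ and $g$ outside $[0,1]$ as in \eqref{ass mu}--\eqref{ass g} and testing \eqref{alpha eq} with $\alpha(u-1)^+$ (and symmetrically with the negative part), which uses (G2) and yields $0\le u\le 1$ directly. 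Finally one tests \eqref{reg eq} with $u_\varepsilon$ to get an $H^1(\Omega)$ bound uniform in $\varepsilon$ and passes to the limit $\varepsilon\to 0$, using strong $L^2$ convergence and (K3) to handle $\mu(u_\varepsilon)\nabla w_\varepsilon$ and $g(u_\varepsilon)$. If you want to keep a fixed-point strategy, this $\varepsilon$-regularization (or some equivalent coercive perturbation) is the missing ingredient; the constant-$v$ selection cannot be repaired.
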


Many physically relevant reaction terms \cite{KS,BO,BEG} are included in these scenarios, such as the ones illustrated in~\eqref{bio react term}-\eqref{cho}. 

\begin{corollary}[Cahn-Hilliard with logistic reaction term] \label{cor:log}
Let $g$ be as in~\eqref{bio react term} with $\alpha(x)\geq\alpha_{0}>0$ a.e. in $\Omega$.
Then, the solution $u$ to~\eqref{nonlocCH} 
converges to $1$ exponentially fast in $L^{2}(\Omega)$ for every initial datum $u_0$ not
identically $0$. Moreover, $u=0$ is an equilibrium point.
\end{corollary}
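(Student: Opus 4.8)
The plan is to deduce Corollary~\ref{cor:log} directly from Theorem~\ref{thm:eq}, Case~1, so that the only work is to check that the logistic term $g(x,u)=\alpha(x)u(1-u)$ with $\alpha\geq\alpha_0>0$ meets the relevant hypotheses, plus a one-line verification that $u\equiv0$ is an equilibrium. First I would observe that $g$ satisfies the standing assumptions~(G): measurability in $x$ follows from $\alpha\in L^\infty(\Omega)$; for fixed $x$ the map $s\mapsto\alpha(x)s(1-s)$ is a quadratic polynomial whose first two derivatives are bounded by $\Vert\alpha\Vert_{L^\infty(\Omega)}$ times absolute constants, uniformly in $x$, which gives~(G1); and $g(x,0)=g(x,1)=0$ gives~(G2) trivially.

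Next I would verify the two conditions of Case~1. Since $\alpha(x)\geq0$ and $s(1-s)\geq0$ for $s\in[0,1]$, we have $g(x,s)\geq0$ for a.a.~$x$ and all $s\in[0,1]$, i.e.~\eqref{hA1}. For~\eqref{hA2}, given $\kappa>0$ I would choose $m_\kappa:=\alpha_0\kappa>0$; then for $s\in[\kappa,1]$ and a.a.~$x$,
\[
g(x,s)=\alpha(x)\,s\,(1-s)\;\geq\;\alpha_0\,\kappa\,(1-s)\;=\;-m_\kappa(s-1),
\]
using $\alpha(x)\geq\alpha_0$ and $s\geq\kappa$, which is exactly~\eqref{hA2}. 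Since in addition $g(\cdot,0)\equiv0$, Theorem~\ref{thm:eq}, Case~1, applies verbatim and yields $u(t)\to1$ exponentially fast in $L^2(\Omega)$ for every $u_0\in X$ with $u_0\not\equiv0$.

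Finally, to see that $u\equiv0$ is an equilibrium point I would substitute it into~\eqref{eq:equil}: one has $\nabla u=0$, $\mu(0)=0\cdot(1-0)=0$ hence $\mu\nabla w=0$, and $g(x,0)=0$, so both sides of~\eqref{eq:equil} vanish for every $\psi\in H^1(\Omega)$, while trivially $0\in H^1(\Omega)$ and $0\leq0\leq1$. I do not anticipate a genuine obstacle in this corollary; the one point deserving a moment's attention is the choice of $m_\kappa$ in~\eqref{hA2}, where both $\alpha\geq\alpha_0>0$ (not merely $\alpha\geq0$) and the lower bound $s\geq\kappa$ are needed in order to absorb the factor $s$, and it is worth remarking that the datum $u_0\equiv0$ excluded in the statement is precisely the equilibrium just exhibited, so no solution starting there can converge to $1$.
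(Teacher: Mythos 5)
Your proposal is correct and follows exactly the route the paper intends: Corollary~\ref{cor:log} is obtained as a direct application of Theorem~\ref{thm:eq}, Case~1 (with $g(\cdot,0)\equiv 0$, so convergence holds for every $u_0\not\equiv 0$), and your verification of~\eqref{hA1}--\eqref{hA2} with $m_\kappa=\alpha_0\kappa$, together with the observation that $u\equiv 0$ satisfies~\eqref{eq:equil} since $\mu(0)=0$ and $g(\cdot,0)=0$, supplies precisely the details the paper leaves implicit.
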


\begin{corollary}[Cahn-Hilliard-Bertozzi] \label{cor:CHB}
Let $g$ be as in~\eqref{chb} with $\beta(x)\geq\beta_{0}>0$ a.e.~in $\Omega$ for $\beta_{0}=\beta_{0}(\Omega,K)$ sufficiently
large.
Then, the solution $u$ converges
to the unique stationary point $u^{\ast}$ exponentially fast in $L^{2}(\Omega)$.
\end{corollary}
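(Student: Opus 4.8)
The plan is to obtain this corollary as a direct application of Case~2 of Theorem~\ref{thm:eq}: the only work is to check that the specific reaction term $g(x,u)=\beta(x)(h(x)-u)$ fits the hypotheses of that theorem, after which the conclusion is verbatim. First I would verify the standing assumptions, of which only (G) involves $g$. Measurability of $g$ in $x$ follows from $\beta,h\in L^{\infty}(\Omega)$; the map $s\mapsto\beta(x)(h(x)-s)$ is affine, hence uniformly $C^{1,1}$ in $s$ with seminorms controlled by $\|\beta\|_{L^{\infty}(\Omega)}$, which gives (G1); and (G2) holds because $g(x,0)=\beta(x)h(x)\geq 0$ (as $\beta\geq 0$, $h\geq 0$) while $g(x,1)=\beta(x)(h(x)-1)\leq 0$ (as $\beta\geq 0$, $h\leq 1$) for a.a.~$x\in\Omega$. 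Thus $(X,S(t))$ is well defined and Theorem~\ref{thm:eq} is available.

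The key step is then to check the extra hypothesis of Case~2, namely~\eqref{hC}. Since $g$ is affine in its second argument, it is in particular uniformly differentiable in $s$, and $\partial_s g(x,s)=-\beta(x)\leq-\beta_0$ for a.a.~$(x,s)\in\Omega\times[0,1]$. Let $\lambda(\Omega,K)$ be the threshold appearing in~\eqref{hC}, produced in the proof of Theorem~\ref{thm:eq} by the linearization and Gronwall argument around the equilibrium, where the nonlocal contribution $\mu\nabla w$ is absorbed using the kernel bounds (K2)--(K4). Choosing $\beta_0=\beta_0(\Omega,K)$ at least as large as $\lambda(\Omega,K)$, hypothesis~\eqref{hC} holds with constant $\beta_0$. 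Case~2 of Theorem~\ref{thm:eq} then provides a unique equilibrium $u^{\ast}$ and exponential convergence $u(t)\to u^{\ast}$ in $L^{2}(\Omega)$ as $t\to+\infty$ for every $u_0\in X$, which is exactly the assertion.

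The only point that needs care is the compatibility of constants: one must ensure that ``$\beta_0$ sufficiently large'' in the statement can be pinned to the explicit value $\lambda(\Omega,K)$ coming out of Theorem~\ref{thm:eq}. This is bookkeeping rather than a genuine obstacle; in particular, unlike in Corollary~\ref{cor:log}, no restriction on $u_0$ is needed here, since Case~2 of Theorem~\ref{thm:eq} is unconditional in the initial datum.
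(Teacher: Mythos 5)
Your proposal is correct and matches the paper's intended argument: the corollary is exactly the specialization of Case~2 of Theorem~\ref{thm:eq} to $g(x,u)=\beta(x)(h(x)-u)$, where $\partial_s g=-\beta(x)\leq-\beta_0$ and $\beta_0\geq\lambda(\Omega,K)$ (the threshold $C_1=\tfrac12(\tfrac{r_2}{4}+r_\infty)^2$ from the linearization/Gronwall estimate) yields hypothesis~\eqref{hC}. Your verification of (G1)--(G2) and the remark that no restriction on $u_0$ is needed are consistent with the paper.
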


\begin{corollary}[Cahn-Hilliard-Oono] \label{cor:CHO}
Let $g$ be as in~\eqref{cho} with $\sigma(x)\geq\sigma_{0}>0$ a.e.~in $\Omega$.
Then, the solution $u$ converges to $0$ exponentially fast in $L^{2}(\Omega)$.
\end{corollary}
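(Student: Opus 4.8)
The plan is to obtain the statement as a direct application of Case~1 of Theorem~\ref{thm:eq}, in its ``$g\le 0$'' formulation, so that the work reduces to checking that $g(x,u)=-\sigma(x)u$ meets the relevant hypotheses. First I would verify the standing assumptions (G): since $\sigma\in L^{\infty}(\Omega)$ is positive, the map $x\mapsto g(x,s)$ is measurable, and $s\mapsto -\sigma(x)s$ is affine, hence uniformly $C^{1,1}$ on $[0,1]$ with all bounds controlled by $\|\sigma\|_{L^{\infty}(\Omega)}$; moreover $g(x,0)=0\ge 0$ and $g(x,1)=-\sigma(x)\le 0$ for a.a.\ $x$, which is exactly (G2). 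In particular Theorems~\ref{existence and separation} and~\ref{thm:eq} are applicable to this $g$.

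Next I would check the two conditions (B1)--(B2) of Case~1. Condition (B1) is immediate: for $s\in[0,1]$ one has $g(x,s)=-\sigma(x)s\le 0$ a.e., because $\sigma\ge 0$ and $s\ge 0$. For (B2) the key point is the uniform lower bound $\sigma(x)\ge\sigma_{0}>0$: for every $s\in[0,1]$, and in particular on each interval $[0,1-\kappa]$, we have $g(x,s)=-\sigma(x)s\le -\sigma_{0}s$, so one may take $m_{\kappa}=\sigma_{0}$ \emph{independently of $\kappa$}. This uniformity is the only mildly delicate bookkeeping point, and it is precisely what guarantees a convergence rate that does not degenerate. Finally, since $g(x,1)=-\sigma(x)\le -\sigma_{0}<0$ for a.a.\ $x$, the set $\{x\in\Omega:g(x,1)<0\}$ has full (hence positive) measure, so the conclusion of Case~1 holds for \emph{every} $u_{0}\in X$.

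Applying Theorem~\ref{thm:eq}, Case~1, then yields $u(t)\to 0$ exponentially fast in $L^{2}(\Omega)$, which is the asserted statement. For transparency I would also record the mechanism behind Case~1 in this concrete situation: integrating~\eqref{nonloc CH 1} over $\Omega$ using the no-flux boundary condition (equivalently, taking $\psi\equiv 1$ in the weak formulation) gives $\tfrac{\mathrm{d}}{\mathrm{d}t}\int_{\Omega}u=\int_{\Omega}g(u)=-\int_{\Omega}\sigma(x)u\le -\sigma_{0}\int_{\Omega}u$, whence $\int_{\Omega}u(t)\le \mathrm{e}^{-\sigma_{0}t}\int_{\Omega}u_{0}$; combined with the bound $0\le u\le 1$ from Theorem~\ref{existence and separation}, this gives $\|u(t)\|_{L^{2}(\Omega)}^{2}\le \|u(t)\|_{L^{\infty}(\Omega)}\|u(t)\|_{L^{1}(\Omega)}\le |\Omega|\,\mathrm{e}^{-\sigma_{0}t}$, i.e.\ $\|u(t)\|_{L^{2}(\Omega)}\le |\Omega|^{1/2}\mathrm{e}^{-\sigma_{0}t/2}$. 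Thus there is essentially no real obstacle beyond tracking constants: the corollary is the sign-definite branch of Theorem~\ref{thm:eq} specialized to a linear, strictly dissipative reaction term.
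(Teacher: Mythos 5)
Your proposal is correct and matches the paper's intended argument: the corollary is exactly the $g\le 0$ branch of Case~1 of Theorem~\ref{thm:eq}, with (B1)--(B2) holding for $g(x,s)=-\sigma(x)s$ with $m_\kappa=\sigma_0$ and with $\{x:g(x,1)<0\}$ of full measure, so it applies to every $u_0\in X$. Your closing direct computation (mass decay via $\psi\equiv 1$ plus $0\le u\le 1$) is precisely the mechanism of the paper's Case~1 proof (there packaged through Lemma~\ref{l2 convergence}), so nothing further is needed.
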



\section{The global attractor: Proof of Theorem~\ref{thm:ga}} \label{sec:ga}

\subsection{Part (i): Existence}

The existence of the global attractor is based on proving that $S(t)$ is dissipative
and possesses a compact absorbing set (cf. Theorem $10.5$ in \cite{Ro}).
The first property holds since $X$ is $\Vert \cdot\Vert _{L^{2}(\Omega)}$-bounded. In order to show
the existence of a compact absorbing set, we simply derive uniform $L^2$-estimates on the
gradient of $\nabla u$. To this aim, we start by testing equation
\eqref{nonloc CH 1} with $u$ and estimate, by using boundedness of $u$ and $g$
and assumption \eqref{def mu} on $\mu$,
\[
\frac{\mathrm{d}}{\mathrm{d}t} \frac12 \Vert u\Vert _{L^{2}(\Omega)}^{2}+\Vert \nabla
u\Vert _{L^{2}(\Omega)}^{2}+\left(  \mu\nabla w,\nabla \right)  _{L^{2}(\Omega)} \leq \int_\Omega g(u)u\leq
C.%
\]
Thus, integrating over $[t,t+1]$ and using $0 \leq u \leq 1$, we get
\[
\int_{t}^{t+1}\left(  \Vert \nabla u\Vert _{L^{2}(\Omega)}^{2}+\left(  \mu\nabla w,\nabla u\right)  _{L^{2}(\Omega)} \right)  \leq C+ \frac12 \Vert u(t)\Vert _{L^{2}(\Omega)}^{2} \leq
C+\frac{1}{2}|\Omega|.
\]
Hence%
\[
\sup_{t\in\lbrack0,+\infty)}\int_{t}^{t+1}\left(  \Vert \nabla u\Vert _{L^{2}(\Omega
)}^{2}+\left(  \mu\nabla w,\nabla u \right)_{L^{2}(\Omega)} \right)  <C_1
\]
where $C$ does not depend on $u_{0}$.

\noindent
Testing now equation \eqref{nonloc CH 1} with $\dot{u}$ we get
\begin{equation}
\Vert \dot{u}\Vert _{L^{2}(\Omega)}^{2}+\frac{\mathrm{d}}{\mathrm{d}t}\frac{1}%
{2}\Vert \nabla u\Vert _{L^{2}(\Omega)}^{2}+\left(  \mu\nabla w,\nabla \dot{u} \right)  _{L^{2}(\Omega)}=\int_{\Omega}g\left(  u\right)  \dot{u}\text{.}
\label{est1}%
\end{equation}
Note that $\left(  \mu\nabla w,\nabla \dot{u}\right)  _{L^{2}(\Omega
)}=-\int_{\Omega}\frac{\mathrm{d}}{\mathrm{d}t} \left(  \mu\nabla w\right) \nabla u+\frac{\mathrm{d}%
}{\mathrm{d}t}\int_{\Omega}\mu\nabla w\nabla u$. Using properties of $u$, $\mu$ and $K$, 
we estimate
\[
\left|  \int_{\Omega}\frac{\mathrm{d}}{\mathrm{d}t}\left(  \mu\nabla w\right) \nabla u\right| 
\leq C \Vert \dot{u}\Vert _{L^{2}(\Omega)}\Vert \nabla u\Vert _{L^{2}(\Omega)}\leq C \Vert \nabla
u\Vert _{L^{2}(\Omega)}^{2}+\frac{1}{2}\Vert \dot{u}\Vert _{L^{2}(\Omega)}^{2}.
\]
By substituting into (\ref{est1}) and using Young's inequality, we get
\begin{align}
\frac{1}{2}\Vert \dot{u}\Vert _{L^{2}(\Omega)}^{2}+\frac{\mathrm{d}}{\mathrm{d}%
t}\left(  \frac{1}{2}\Vert \nabla u\Vert _{L^{2}(\Omega)}^{2}+\int_{\Omega}\mu\nabla
w\nabla u\right)   &\leq \int_{\Omega}g\left( u\right)  \dot{u}+C\Vert \nabla
u\Vert _{L^{2}(\Omega)}^{2}\nonumber\\
&\leq C+C\Vert \nabla u\Vert _{L^{2}(\Omega)}^{2}+\frac{1}{4}\Vert \dot{u}%
\Vert _{L^{2}(\Omega)}^{2}\text{.} \label{est2}%
\end{align}
Note that, thanks to~\eqref{def mu}, \eqref{K3}, using H\"older and Young's inequalities,
\begin{align*}
\Vert \nabla u\Vert _{L^{2}(\Omega)}^{2}  &  =\Vert \nabla u\Vert _{L^{2}(\Omega)}^{2}%
+2\int_{\Omega}\mu\nabla w\nabla u-2\int_{\Omega}\mu\nabla w\nabla u\\
&  \leq\Vert \nabla u\Vert _{L^{2}(\Omega)}^{2}+2\int_{\Omega}\mu\nabla w\nabla
u+\frac{1}{2}\Vert \nabla u\Vert _{L^{2}(\Omega)}^{2}+C.
\end{align*}
Thus,
\begin{equation}
\label{ineq1}
\frac{1}{2}\Vert \nabla u\Vert _{L^{2}(\Omega)}^{2}\leq\Vert \nabla u\Vert _{L^{2}(\Omega
)}^{2}+2\int_{\Omega}\mu\nabla w\nabla u+C.
\end{equation}
By substituting into (\ref{est2}), we finally have
\[
\frac{\mathrm{d}}{\mathrm{d}%
t}\left(  \frac{1}{2}\Vert \nabla u\Vert _{L^{2}(\Omega)}^{2}+\int_{\Omega}\mu\nabla
w\nabla u\right)  \leq C_2 \left(  \frac{1}{2}\Vert \nabla u\Vert _{L^{2}(\Omega
)}^{2}+\int_{\Omega}\mu\nabla w\nabla u\right) + C_3.
\]
Applying the Uniform Gronwall Lemma (Lemma~\eqref{ugl}, see Appendix) with
\[
\eta(t)=\frac{1}{2}\Vert \nabla u(t)\Vert _{L^{2}(\Omega)}^{2}+\int_{\Omega}\mu\nabla
w\nabla u,
\]
$r=1$, $t_{0}=0$, $\phi(t)=a_1=C_2$, $\psi(t)=a_2=C_3$ and $a_{3}=C_1$
we get
\[
\frac{1}{2}\Vert \nabla u(t)\Vert _{L^{2}(\Omega)}^{2}+\int_{\Omega}\mu(t)\nabla
w(t)\nabla u(t)\leq C \hspace{.7cm} \forall t \geq 1.
\]
This yields, as a consequence of~\eqref{ineq1}, the following relevant estimate
\begin{align}
\label{gradient unif bound}
\Vert \nabla u(t)\Vert _{L^{2}(\Omega)}^{2}\leq C_4 \hspace{.7cm} \forall t \geq 1,
\end{align}
where $C_3$ does not depend on $u_{0}$. This proves that $B_{1}=\{v\in
H^{1}(\Omega) \subset X:\Vert \nabla v\Vert _{L^{2}\left(  \Omega\right)  }\leq C_3\}$ is an absorbing set for
$(X,S(t))$ which is also compact in $L^{2}(\Omega)$. By virtue of Theorem~$10.5$ of \cite{Ro}, there
exists the global attractor $\mathcal{A}$ for $(X,S(t))$.

\subsection{Part (ii): Finite fractal dimension}

In this section, we show that $d_{\mathrm{frac}}(\mathcal{A})$ is finite. To this aim we adopt a standard strategy presented
e.g. in~\cite{Ro}.
The underlying heuristic idea is the following. Consider an arbitrary infinitesimal $n$-dimensional set $\mathcal{V}$,
(e.g. a cube) and follow its evolution under the action of the semigroup $S(t)$. Suppose that for all $\mathcal{V}$
the $n$-dimensional volume vanishes for large times, i.e., 
\[
 \text{Vol}_n(S(t)\mathcal{V}) \to 0 \quad \text{ as } t \to \infty.
\]
Since the global attractor $\mathcal{A}$ is the union of all the $\omega$-limit sets, this means that $\mathcal{A}$
contains no $n$-dimensional subsets. Thus $d_{\mathrm{frac}}(\mathcal{A}) \leq n$. \\
\noindent
In order to do this in a rigorous way we first need a technical condition ensuring us that the flow is smooth enough
to linearize the equation around a trajectory:

\begin{lemma}[Uniform differentiability]
\label{lemma unif dif}
$S(t)$ is uniformly differentiable, i.e., for all
$u_{0}\in\mathcal{A}$ there exists a linear operator $\Lambda
(t,u_{0}):L^{2}(\Omega)\mathcal{\rightarrow}L^{2}(\Omega)$ such that for all
$t\geq0$,%
\begin{align}
\sup_{v_{0}:0<\left\Vert  u_{0}-v_{0}\right\Vert  _{L^{2}(\Omega)}%
\leq\varepsilon} & \frac{\left\Vert  S(t)v_{0}-S(t)u_{0}-\Lambda(t,u_{0}%
)(v_{0}-u_{0})\right\Vert  _{L^{2}(\Omega)}}{\left\Vert  u_{0}-v_{0}\right\Vert 
_{L^{2}(\Omega)}} \rightarrow0 & \text{ as }\varepsilon\rightarrow
0\label{unif diff 1}, \\
\text{and } \hspace{1cm}\sup_{u_{0}\in\mathcal{A}} & \left\Vert  \Lambda(t,u_{0})\right\Vert  _{L^{2}%
(\Omega)\rightarrow L^{2}(\Omega)} <\infty & \text{ for all }t\geq
0.\label{unif diff 2}%
\end{align}
Moreover, $\Lambda(t, u_0)$ is compact.
\end{lemma}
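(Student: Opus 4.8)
The plan is to identify the candidate linearized operator $\Lambda(t,u_0)$ as the solution operator of the formal linearization of \eqref{nonloc CH 1.2} around the trajectory $u(t)=S(t)u_0$, then verify the three claims: the differentiability estimate \eqref{unif diff 1}, the uniform bound \eqref{unif diff 2}, and compactness. First I would fix $u_0\in\mathcal A$ and set $u(t)=S(t)u_0$. By Theorem~\ref{existence and separation}(iii)--(iv) (and since $\mathcal A$ is bounded in $H^1$, indeed attracted into the absorbing set $B_1$, so that for $t\geq 1$ the trajectory enjoys $H^2$ and uniform separation estimates), the coefficients $\mu'(u)=1-2u$, $\mu''(u)=-2$ and $\partial_s g(x,u)$ are bounded along the trajectory, and $w=K\ast(1-2u)$ together with $\nabla w$ is bounded in $L^\infty(\Omega)$. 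Given $\xi_0\in L^2(\Omega)$, define $\xi(t)=\Lambda(t,u_0)\xi_0$ as the weak solution of the linear problem
\[
\langle\dot\xi,\psi\rangle+(\nabla\xi,\nabla\psi)_{L^2}+(\mu(u)\nabla(K\ast(-2\xi)),\nabla\psi)_{L^2}+(\mu'(u)\xi\,\nabla w,\nabla\psi)_{L^2}=(\partial_s g(x,u)\xi,\psi)_{L^2}
\]
for all $\psi\in H^1(\Omega)$, with $\xi(0)=\xi_0$. Existence, uniqueness and the energy estimate for this linear parabolic problem follow by a standard Galerkin argument using \eqref{K2}, \eqref{K3}, the $L^\infty$ bounds on $u$, $\nabla w$ and the coefficients, and Gronwall's inequality; this already gives $\|\xi(t)\|_{L^2(\Omega)}\leq e^{Ct}\|\xi_0\|_{L^2(\Omega)}$ with $C$ depending only on $K,\Omega$ (through the uniform bounds valid on $\mathcal A$), which is precisely \eqref{unif diff 2}.

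Next I would establish \eqref{unif diff 1}. Let $v_0\in X$ with $0<\|u_0-v_0\|_{L^2}\leq\varepsilon$, write $v(t)=S(t)v_0$, and set $\theta=v-u-\xi$ where $\xi=\Lambda(t,u_0)(v_0-u_0)$. Subtracting the weak formulations for $v$, $u$ and $\xi$, the equation for $\theta$ has the same linear principal part as the equation for $\xi$ plus a remainder $R$ collecting the quadratic-in-$(v-u)$ terms that come from Taylor expanding $\mu(v)-\mu(u)$ (here $\mu''\equiv-2$ makes this term exactly quadratic), $\mu(v)\nabla w_v-\mu(u)\nabla w_u$, and $g(x,v)-g(x,u)$ (using that $g$ is $C^{1,1}$, i.e. \eqref{G1}). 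Testing the $\theta$-equation with $\theta$ and using the boundedness of all coefficients along $u$, Young's inequality, \eqref{K2}--\eqref{K3} and the continuous-dependence bound $\|v-u\|_{L^\infty(0,t;L^2)}\leq e^{Ct}\varepsilon$ from Theorem~\ref{existence and separation}(ii), one obtains
\[
\frac{\mathrm d}{\mathrm dt}\tfrac12\|\theta\|_{L^2(\Omega)}^2\leq C\|\theta\|_{L^2(\Omega)}^2+\|R\|_{(H^1)^\ast}\|\theta\|_{H^1},
\]
and the remainder satisfies $\|R\|\leq C\|v-u\|_{L^2}^2\leq C e^{2Ct}\varepsilon^2$ (plus, for the reaction and convolution terms, an extra factor controlled by the uniform bounds). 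Since $\theta(0)=0$, Gronwall gives $\|\theta(t)\|_{L^2(\Omega)}\leq C(t)\varepsilon^2$, hence the quotient in \eqref{unif diff 1} is $O(\varepsilon)\to 0$, uniformly in $v_0$.

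Finally, for compactness of $\Lambda(t,u_0)$: for $t>0$ the linear equation has parabolic smoothing, so I would test with $\dot\xi$ (or with $-\Delta\xi$ in a Galerkin scheme) and use \eqref{K4} to bootstrap $\xi(t)$ into $H^1(\Omega)$, or even $H^2(\Omega)$, for every $t>0$ with a bound depending only on $t$ and $\|\xi_0\|_{L^2}$; the compact embedding $H^1(\Omega)\hookrightarrow\hookrightarrow L^2(\Omega)$ then makes $\Lambda(t,u_0):L^2(\Omega)\to L^2(\Omega)$ compact. I expect the main technical obstacle to be handling the case $t\in[0,1]$, where the uniform separation from the pure phases and the $H^2$-regularity of the trajectory are not available; however, since $u_0\in\mathcal A$ and $\mathcal A\subset B_1$ is bounded in $H^1$ and totally invariant, the trajectory already lies in $B_1$ for all $t\geq 0$, so the needed $H^1$ and $L^\infty$ bounds on $u$, $w$, $\nabla w$ hold uniformly for all $t\geq 0$, and the $H^2$-bound on $u$ (used only to control $\|\nabla w\|_{L^\infty}$ via \eqref{K4}, or directly via \eqref{K3} with $p$ large) is in fact not needed in its sharp form — \eqref{K2}--\eqref{K3} suffice. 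The other point requiring care is that all constants in the estimates above depend only on $K$ and $\Omega$ (and on $t$), not on the particular $u_0\in\mathcal A$, which is exactly why the suprema over $\mathcal A$ and over $v_0$ in the statement are finite; this follows because the relevant bounds on the trajectory are uniform over the compact, totally invariant set $\mathcal A$.
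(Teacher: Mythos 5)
Your overall architecture matches the paper's: the same linearized equation defines $\Lambda(t,u_0)$, the $L^2$ energy estimate gives \eqref{unif diff 2}, and an equation for $\theta=v-u-U$ tested with $\theta$ gives \eqref{unif diff 1}. However, there is a genuine gap in your treatment of the remainder. The quadratic terms coming from Taylor expansion (e.g.\ $\tfrac12\mu''(u)(v-u)^2\nabla w(v)$ and $\mu'(u)(v-u)\nabla\tilde w(v-u)$) pair with $\nabla\theta$, so after Young's inequality the right-hand side of the differential inequality contains $\|v-u\|_{L^4(\Omega)}^4$, not $\|v-u\|_{L^2(\Omega)}^2$; your claimed bound $\|R\|\leq C\|v-u\|_{L^2}^2$ would require $\|v-u\|_{L^4}^2\lesssim\|v-u\|_{L^2}^2$, which fails for $d=2,3$. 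The only free estimate, $|v-u|\leq 1$ a.e., gives $\|v-u\|_{L^4}^4\leq\|v-u\|_{L^2}^2$, and feeding that into Gronwall yields $\|\theta(t)\|_{L^2}\leq C(t)\|u_0-v_0\|_{L^2}$ — a bounded difference quotient, not one that tends to zero — so differentiability does not follow from this route. The paper closes exactly this gap by using the invariance of $\mathcal A$ together with Theorem~\ref{existence and separation}(iii) to get an $H^2$ bound on the trajectories, and then the Gagliardo--Nirenberg interpolation $\|u-v\|_{L^4}\leq C\|u-v\|_{H^2}^{d/8}\|u-v\|_{L^2}^{1-d/8}$, which turns the remainder into $C\|u_0-v_0\|_{L^2}^{(8-d)/2}$ and gives the quotient $O(\varepsilon^{(4-d)/4})$ (not the $O(\varepsilon)$ you claim), vanishing precisely because $d<4$.

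Two smaller points. For compactness, your plan to test with $-\Delta\xi$ is the paper's, but your assertion that \eqref{K2}--\eqref{K3} suffice and that the $H^2$ regularity of $u$ is dispensable is not accurate: the terms $\mu''(u)\nabla w(u)\,U\,\nabla u$ and $\mu'(u)\nabla u\,\nabla\tilde w(U)$ require $\nabla u\in L^4(\Omega)$, i.e.\ the $H^2$ bound on $u$, and $\|\Delta\tilde w(U)\|_{L^2}\leq C\|U\|_{H^1}$ uses \eqref{K4}. Moreover, since $\xi_0$ is only in $L^2(\Omega)$, you cannot integrate the $H^1$ estimate from $t=0$ directly; some smoothing device is needed (the paper integrates the inequality in $s$ over $(0,t)$ and uses the time-integrated $H^1$ bound) to obtain a pointwise-in-time bound $\|\nabla U(t)\|_{L^2}\leq c(t)$. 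These are fixable, but as written the proof of \eqref{unif diff 1} is the step that would fail.
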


\begin{proof}
Let $v(t)=S(t)v_{0}$, $u(t)=S(t)u_{0}$ and $U(t)=\Lambda(t,u_{0})(v_{0}%
-u_{0})$ where $\Lambda(t,u_{0})$ is the solution operator associated to the
linearized equation
\begin{align}
\dot{U}-\Delta U-\nabla\cdot\left( \mu^{\prime}(u)\nabla w(u)U+\mu
(u)\nabla\tilde{w}(U)\right) -g^{\prime}(u)U & =0\text{ in }\Omega
\times(0,\infty)\text{,}\label{linearized eq}\\
(\nabla U+\mu^{\prime}(u)\nabla w(u)U+\mu(u)\nabla\tilde{w}(U))\cdot n &
=0\text{ on }\partial\Omega\times(0,\infty)\text{,}\label{linearized eq 2}%
\end{align}
where $\tilde{w}(U)=K\ast(-2U)$. We can prove that for all $u_{0},v_{0}%
\in\mathcal{A}$, there exists a unique solution $U$ to the linearized equation
(\ref{linearized eq})-(\ref{linearized eq 2}) such that $U(0)=v_{0}-u_{0}$
with the following regularity%
\[
U\in H^{1}(0,T;\left( H^{1}(\Omega)\right) ^{\ast})\cap L^{2}(0,T;H^{1}%
(\Omega))\text{ for all }T>0.
\]
Moreover, for all $t>0$ there exists a constant $c:=c(t)$ independent on
$u_{0}$ and $v_{0}$ such that
\begin{equation}
\left\Vert  \nabla U\right\Vert  _{L^{2}(\Omega)}\leq
c. \label{compactness estimate}
\end{equation}
Aiming at clarity, we postpone the proof of these results to the Appendix (Lemma~\ref{lineq}).

Taking the difference between equations \eqref{nonloc CH 1} for $v$,
\eqref{nonloc CH 1} for $u$ and \eqref{linearized eq} and testing with $v-u-U$,
we get
\begin{align*}
0 & = \frac{\mathrm{d}}{\mathrm{d}t} \frac12 \left\Vert  v-u-U\right\Vert 
_{L^{2}(\Omega)}^{2}+\left\Vert  \nabla(v-u-U)\right\Vert  _{L^{2}(\Omega
)}^{2}\\
& +\int_{\Omega}\left( \mu(v)\nabla w(v)-\mu(u)\nabla w(u)-\mu^{\prime
}(u)\nabla w(u)U-\mu(u)\nabla\tilde{w}(U)\right) \nabla(v-u-U)\\
& +\int_{\Omega}\left( g(v)-g(u)-g^{\prime}(u)U\right) (v-u-U).
\end{align*}
Note that
\begin{align*}
\int_{\Omega}\left( g(v)-g(u)-g^{\prime}(u)U\right) (v-u-U) & \leq\int_{\Omega}g^{\prime}(u)(v-u-U)^{2}+\int_{\Omega}
\frac12 \sup_{s\in\lbrack0,1]}|g^{\prime\prime}(s)| |v-u|^{2}|v-u-U|\\
& \leq C\left\Vert  v-u-U\right\Vert  _{L^{2}(\Omega)}^{2}\\
& + C \left(
\int_{\Omega}|v-u|^{4}+\int_{\Omega}|v-u-U|^{2}\right) \\
& \leq C\left\Vert  v-u-U\right\Vert  _{L^{2}(\Omega)}^{2}+ C\left\Vert 
v-u\right\Vert  _{L^{4}(\Omega)}^{4}\text{.}%
\end{align*}%
By adding and subtracting first $\mu(u) \nabla w(v)$ and then $\mu^{\prime} (u) (v-u) \nabla w(u)$ we also have
\begin{align*}
& \int_{\Omega}\left( \mu(v)\nabla w(v)-\mu(u)\nabla w(u)-\mu^{\prime
}(u)\nabla w(u)U-\mu(u)\nabla\tilde{w}(U)\right) \nabla(v-u-U)\\
& =\int_{\Omega}\left( \left( \mu(v)-\mu(u)\right) \nabla w(v)-\mu
(u)\left( \nabla w(u)-\nabla w(v)\right) -\mu^{\prime}(u)\nabla
w(u)U-\mu(u)\nabla\tilde{w}(U)\right) \nabla(v-u-U)\\
& =\int_{\Omega}(\mu^{\prime}(u)(v-u-U)\nabla w(u)\nabla(v-u-U)+\int_{\Omega
}(\mu^{\prime}(u)(v-u)\nabla\tilde{w}(v-u)\nabla(v-u-U)\\
& +\int_{\Omega}\mu(u)\nabla\tilde{w}(v-u-U)\nabla(v-u-U)+\int_{\Omega}%
\frac12 \sup_{s\in\lbrack0,1]}|\mu^{\prime\prime}(s)| (u-v)^{2}\nabla w(v)\nabla(v-u-U).
\end{align*}
We estimate
\begin{align*}
& \int_{\Omega}(\mu^{\prime}(u)(v-u-U)\nabla w(u)\nabla(v-u-U)\\
& \leq\sup_{s\in\lbrack0,1]}|\mu^{\prime}(s)|~r_{\infty}\left\Vert  u\right\Vert 
_{L^{\infty}\left( \Omega\right) }\left\Vert  v-u-U\right\Vert 
_{L^{2}\left( \Omega\right) }\left\Vert  \nabla(v-u-U)\right\Vert 
_{L^{2}\left( \Omega\right) }\\
& \leq\frac{1}{2}\left\Vert  \nabla(v-u-U)\right\Vert  _{L^{2}\left(
\Omega\right) }^{2}+C\left\Vert  v-u-U\right\Vert  _{L^{2}\left(
\Omega\right) }^{2}\text{,}%
\end{align*}%
Using assumption~\eqref{K3}
\begin{align*}
& \int_{\Omega}(\mu^{\prime}(u)(v-u)\nabla\tilde{w}(v-u)\nabla(v-u-U)\\
& \leq C\sup_{s\in\lbrack0,1]}|\mu^{\prime}(s)|^{2}\int_{\Omega}%
|v-u|^{2}|\nabla\tilde{w}(v-u)|^{2}+\frac{1}{4}\left\Vert  \nabla
(v-u-U)\right\Vert  _{L^{2}\left( \Omega\right) }^{2}\\
& \leq C\left( \int_{\Omega}|v-u|^{4}\right) ^{1/2}\left( \int_{\Omega
}|\nabla\tilde{w}(v-u)|^{4}\right) ^{1/2}+\frac{1}{4}\left\Vert 
\nabla(v-u-U)\right\Vert  _{L^{2}\left( \Omega\right) }^{2}\\
& \leq Cr_{4}^{2}\int_{\Omega}|v-u|^{4}+\frac{1}{4}\left\Vert  \nabla
(v-u-U)\right\Vert  _{L^{2}\left( \Omega\right) }^{2}\text{,}%
\end{align*}%
\begin{align*}
& \int_{\Omega}\mu(u)\nabla\tilde{w}(v-u-U)\nabla(v-u-U)\\
& \leq\sup_{s\in\lbrack0,1]}\mu(s)r_{2}\left\Vert  v-u-U\right\Vert 
_{L^{2}(\Omega)}\left\Vert  \nabla(v-u-U)\right\Vert  _{L^{2}(\Omega)}\\
& \leq C\left\Vert  v-u-U\right\Vert  _{L^{2}(\Omega)}^{2}+\frac{1}%
{8}\left\Vert  \nabla(v-u-U)\right\Vert  _{L^{2}(\Omega)}^{2}\text{,}%
\end{align*}%
\begin{align*}
& \int_{\Omega}\frac{\mu^{\prime\prime}(c)}{2}(u-v)^{2}\nabla w(v)\nabla
(v-u-U)\\
& \leq\left\Vert  \nabla w(v)\right\Vert  _{L^{\infty}(\Omega)}\int_{\Omega
}|u-v|^{2}|\nabla(v-u-U)|\\
& \leq C\int_{\Omega}|u-v|^{4}+\frac{1}{16}\left\Vert  \nabla
(v-u-U)\right\Vert  _{L^{2}\left( \Omega\right) }^{2}\text{.}%
\end{align*}
Combining the above estimates we get
\begin{align}
& \frac{\mathrm{d}}{\mathrm{d}t}\frac{\left\Vert  v-u-U\right\Vert 
_{L^{2}(\Omega)}^{2}}{2}+\frac{1}{16}\left\Vert  \nabla(v-u-U)\right\Vert 
_{L^{2}(\Omega)}^{2} \nonumber \\
& \leq C\left\Vert  v-u\right\Vert  _{L^{4}\left( \Omega\right) }%
^{4}+C\left\Vert  v-u-U\right\Vert  _{L^{2}(\Omega)}^{2}\text{.} \label{formula}
\end{align}
We now recall that $u_{0}$ and $v_{0}$ belong to the global attractor
$\mathcal{A}$. Fix $t>t_0>0$. Since $\mathcal{A}$ is invariant, $u,v\in\mathcal{A}$
and for every $t_0 \in (0, t+1)$, $u$ is bounded in $L^\infty(t_0, t+1, H^2(\Omega))$
(cf. $(iii)$, Thm.~2).
Using the Gagliardo-Nirenberg
interpolation inequality~\cite{Ni}, we have that
\[
\left\Vert  u-v\right\Vert  _{L^{4}(\Omega)}\leq C\left\Vert  u-v\right\Vert 
_{H^{2}(\Omega)}^{a}\left\Vert  u-v\right\Vert  _{L^{2}(\Omega)}^{1-a}\text{,}%
\]
where $a=d/8$.
Therefore, as a consequence of the continuous dependence from initial data, we
have
\[
\left\Vert  u(t)-v(t)\right\Vert  _{L^{4}(\Omega)}\leq C(t_0,t+1) \left\Vert 
u(t)-v(t)\right\Vert  _{L^{2}(\Omega)}^{\frac{8-d}{8}}\leq C(t_0,t+1) e^{Ct}\left\Vert 
u_{0}-v_{0}\right\Vert  _{L^{2}(\Omega)}^{\frac{8-d}{8}}\text{.}%
\]
Substituting into inequality (\ref{formula}) we get
\begin{equation}
\frac{\mathrm{d}}{\mathrm{d}t} \left\Vert  v-u-U\right\Vert  _{L^{2}%
(\Omega)}^{2} \leq C(t_0,t+1)\left\Vert  u_{0}-v_{0}\right\Vert  _{L^{2}%
(\Omega)}^{4\left( \frac{8-d}{8}\right) }+C\left\Vert  v-u-U\right\Vert 
_{L^{2}(\Omega)}^{2}\nonumber
\end{equation}
and hence, applying the Gronwall lemma and
recalling that \mbox{$v_{0}-u_{0}-U(0)=0$}
\[
\left\Vert  v-u-U\right\Vert  _{L^{2}(\Omega)}^{2}\leq C(t_0,t+1)
\left\Vert  u_{0}-v_{0}\right\Vert  _{L^{2}(\Omega)}^{\frac{8-d}{2}%
}\text{.}%
\]
As a consequence, for every $t>0$ fixed we have%
\begin{align}
& \sup_{v_{0}:0<\left\Vert  u_{0}-v_{0}\right\Vert  _{L^{2}(\Omega)}%
\leq\varepsilon}\frac{\left\Vert  S(t)v_{0}-S(t)u_{0}-\Lambda(t,u_{0}%
)(v_{0}-u_{0})\right\Vert  _{L^{2}(\Omega)}}{\left\Vert  u_{0}-v_{0}\right\Vert 
_{L^{2}(\Omega)}}\nonumber\\
& =\sup_{v_{0}:0<\left\Vert  u_{0}-v_{0}\right\Vert  _{L^{2}(\Omega)}%
\leq\varepsilon}\frac{\left\Vert  v(t)-u(t)-U(t)\right\Vert  _{L^{2}(\Omega)}%
}{\left\Vert  u_{0}-v_{0}\right\Vert  _{L^{2}(\Omega)}}\leq C(t) \left\Vert  u_{0}-v_{0}\right\Vert 
_{L^{2}(\Omega)}^{\frac{8-d}{4}-1}\leq C(t) \varepsilon^{\frac{4-d}{4}}.\label{formula 2}%
\end{align}
As long as $d<4$, the right hand side of (\ref{formula 2}) goes to zero as $\varepsilon
\rightarrow0$. This proves~\eqref{unif diff 1}.
The last step of our proof, i.e., the boundedness and compactness of $\Lambda(u_{0},t)$
for any fixed $t$, follows from estimate~\eqref{compactness estimate}.
\end{proof}

As already mentioned, in order to prove that $d_{\mathrm{frac}}(\mathcal{A})$ is finite we are interested in 
keeping track of the evolution of the volume of an infinitesimal cube. More precisely, we focus our attention on the
following quantity:
\begin{align}
\mathcal{TR}_n (\mathcal{A}) = \sup_{u_0 \in \mathcal{A}}
 \sup_{P^{(n)}(0)} \left\langle \mathrm{Tr}(L(t; u_0)P^{(n)}(t)) \right\rangle,
\end{align}
where $\left\langle \cdot \right\rangle$ denotes $\left\langle h \right\rangle = \underset{t\to\infty}{\operatorname{limsup}}\frac{1}{t}\int_{0}^{t}h(s)
\mathrm{d}s$.
Here $L(t,u_{0}):L^{2}(\Omega)\rightarrow\left( L^{2}(\Omega)\right)
^{\ast}=L^{2}(\Omega)$ (whose domain is $H^{1}(\Omega)$) is the linearized evolution operator at time $t$
associated with the initial condition $u_{0}\in\mathcal{A}$ given by
\[
(L(t,u_{0})\phi,\psi)_{L^{2}(\Omega)}=-\int_{\Omega}\nabla\phi\nabla
\psi+\left( \mu^{\prime}(u)\nabla w(u)\phi+\mu(u)\nabla\tilde{w}%
(\phi)\right) \nabla\psi-g^{\prime}(u)\phi\psi\text{,}%
\]
where $u(t)=S(t)u_{0}$, and $P^{(n)}:L^{2}(\Omega)\rightarrow L^{2}(\Omega)$ is
a rank $n$ projection operator onto the subspace of $L^{2}(\Omega)$. 

We now take advantage of the following result proved in~\cite{Ro}:
\begin{theorem}[\hspace{-.03cm}{\cite[Thm.13.16]{Ro}}]
\label{thfda}
 Suppose that $S(t)$ is uniformly differentiable on $\mathcal{A}$ and that
 there exists a $t_0$ such that $\Lambda(t,u_0)$ is compact for all $t \geq t_0$.
 If $\mathcal{TR}_n(\mathcal{A})<0$ then $d_{\mathrm{frac}}(\mathcal{A}) \leq n$.
\end{theorem}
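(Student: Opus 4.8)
The plan is to follow the volume-contraction method of Constantin, Foias, and Temam, in the form organized in the proof of Theorem~13.16 of \cite{Ro}. The strategy has two halves: first show that the negativity of the time-averaged trace $\mathcal{TR}_n(\mathcal{A})$ forces the linearized flow to contract all infinitesimal $n$-dimensional volumes uniformly over $\mathcal{A}$; then convert this infinitesimal contraction into a genuine covering-number estimate for $\mathcal{A}$, using uniform differentiability \eqref{unif diff 1}-\eqref{unif diff 2} and the compactness of $\Lambda$.

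First I would establish the Liouville-type formula for the evolution of $n$-dimensional volumes under the linearized flow. Fix $u_0\in\mathcal{A}$ and initial tangent vectors $\xi_1,\dots,\xi_n\in L^2(\Omega)$, and let $U_j(t)=\Lambda(t,u_0)\xi_j$ solve the linearized problem \eqref{linearized eq}-\eqref{linearized eq 2}. Writing $\omega_n(t)=\|U_1(t)\wedge\cdots\wedge U_n(t)\|$ for the volume of the spanned parallelepiped, a standard computation with the generator $L(t,u_0)$ gives
\[
\omega_n(t)=\omega_n(0)\exp\!\left(\int_0^t \mathrm{Tr}\big(L(s,u_0)\,Q_n(s)\big)\,\mathrm{d}s\right),
\]
where $Q_n(s)$ is the orthogonal projection onto $\mathrm{span}\{U_1(s),\dots,U_n(s)\}$. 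Taking the supremum over $u_0\in\mathcal{A}$ and over the initial frame bounds the uniform $n$-volume expansion factor $\bar\omega_n(t):=\sup_{u_0}\sup_{\xi_j}\omega_n(t)/\omega_n(0)$ by $\exp(\int_0^t\sup_{u_0}\sup_{P^{(n)}}\mathrm{Tr}(L\,P^{(n)}))$, where the inner projections evolve exactly as the $Q_n$ above. Since $\bar\omega_n$ is submultiplicative, the subadditive limit $\Pi_n=\lim_{t\to\infty}t^{-1}\log\bar\omega_n(t)$ exists and, after time-averaging the exponent, satisfies $\Pi_n\leq\mathcal{TR}_n(\mathcal{A})$. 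The hypothesis $\mathcal{TR}_n(\mathcal{A})<0$ therefore yields $\Pi_n<0$, so $\bar\omega_n(t)\to0$ exponentially; in particular one can fix a contraction time $t^\ast$ at which $\bar\omega_n(t^\ast)$ is as small as desired.

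Next I would translate this into a box-counting bound for $\mathcal{A}$. Because $\Lambda(t^\ast,u_0)$ is compact (by the last assertion of Lemma~\ref{lemma unif dif}), its singular values $\alpha_1(u_0)\geq\alpha_2(u_0)\geq\cdots$ tend to $0$, so only finitely many exceed any fixed threshold, while $\bar\omega_n(t^\ast)=\sup_{u_0}\prod_{j=1}^n\alpha_j(u_0)$ controls the top $n$ of them. Uniform differentiability \eqref{unif diff 1} means that $S(t^\ast)$ maps each set $B(u_0,\varepsilon)\cap\mathcal{A}$ into the image, up to a remainder that is $o(\varepsilon)$ uniformly in $u_0\in\mathcal{A}$, of that ball under the linear map $\Lambda(t^\ast,u_0)$, i.e.\ essentially an ellipsoid with semi-axes $\varepsilon\alpha_j(u_0)$. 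A standard ellipsoid-covering lemma in Hilbert space (cf.\ \cite{Ro}) then covers such an ellipsoid by a number of balls of radius $\varepsilon/2$ that is controlled by $\prod_{j=1}^n\alpha_j$; the smallness of $\bar\omega_n(t^\ast)$ makes this count beat the halving of the radius, yielding a recursion of the form $N(\mathcal{A},\varepsilon/2)\leq C\,N(\mathcal{A},\varepsilon)$ with a genuine geometric gain. Invariance $S(t^\ast)\mathcal{A}=\mathcal{A}$ lets this be iterated, and substituting $\varepsilon=\varepsilon_0\,2^{-k}$ into Definition~\ref{deffracdim} produces $d_{\mathrm{frac}}(\mathcal{A})\leq n$.

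The main obstacle is this final covering step. The delicate point is to absorb the nonlinear remainder in \eqref{unif diff 1} into the linear ellipsoid picture \emph{uniformly over $\mathcal{A}$ and uniformly across the iterations}, so that the accumulated $o(\varepsilon)$ errors do not swamp the volume gain coming from $\bar\omega_n(t^\ast)$; here the compactness of $\Lambda$ is essential, since it guarantees that all but finitely many semi-axes of the image ellipsoid already lie below the target radius and hence cost nothing in the covering count. Once this bookkeeping is carried out at the single contraction time $t^\ast$, the extension to arbitrary $t$ and the extraction of $d_{\mathrm{frac}}(\mathcal{A})\leq n$ from Definition~\ref{deffracdim} are routine.
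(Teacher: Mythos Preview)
Your sketch is a faithful outline of the Constantin--Foias--Temam volume-contraction argument as presented in \cite[Thm.~13.16]{Ro}, and there is no mathematical gap in it. However, note that the paper does \emph{not} prove this theorem: it is quoted verbatim from \cite{Ro} and used as a black box, with the paper's own work consisting of verifying the hypotheses (uniform differentiability and compactness of $\Lambda$ in Lemma~\ref{lemma unif dif}, and the trace estimate $\mathcal{TR}_n(\mathcal{A})<0$ via \cite[Lemma~13.17]{Ro}). So there is no ``paper's own proof'' to compare against; your proposal simply supplies the argument that the paper outsources to the reference.
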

We prove that in our case $\mathcal{TR}_n(\mathcal{A})<0$ for $n$ big enough.
To this aim, we start by fixing orthonormal vectors $\phi_j$, $j=1, \dots, n$, and defining
$P^{(n)}:L^{2}(\Omega)\rightarrow \underset{j=1, \dots, n}{\text{span}}\left( \{ \phi_j \} \right)$
as the standard orthonormal projection.
We then compute for every $\delta>0$ and some $C_\delta >0$
\begin{align*}
\left\langle \mathrm{Tr}(L(t,u_{0})P^{(n)})\right\rangle & = -\left\langle
\mathrm{Tr}\left( -\Delta P^{(n)}\right) \right\rangle +\left\langle
\sum_{j=1}^{n}\int_{\Omega}\phi_{j}^{2}g^{\prime}(u)\right\rangle \\
&\hspace{.47cm}-\left\langle \sum_{j=1}^{n}\int_{\Omega}\left( \mu^{\prime}(u)\nabla
w(u)\phi_{j}+\mu(u)\nabla\tilde{w}(\phi_{j})\right) \nabla\phi_{j}%
\right\rangle \\
& \leq-\left\langle \int_{\Omega}|\nabla\phi_{j}|^{2}\right\rangle +\left\Vert 
g^{\prime}(u)\right\Vert  _{L^{\infty}(0,\infty;L^{\infty}(\Omega
))}\left\langle \sum_{j=1}^{n}\int_{\Omega}\phi_{j}^{2}\right\rangle \\
&\hspace{.47cm}+\left\Vert  \mu^{\prime}(u)\nabla w(u)\right\Vert  _{L^{\infty}%
(0,\infty;L^{\infty}(\Omega))}\left\langle \sum_{j=1}^{n}\left( C_{\delta
}\int_{\Omega}\phi_{j}^{2}+\delta\int_{\Omega}|\nabla\phi_{j}|^{2}\right)
\right\rangle \\
&\hspace{.47cm}+\left\Vert  \mu(u) \right\Vert  _{L^{\infty}(0,\infty;L^{\infty
}(\Omega))}\left\langle \sum_{j=1}^{n}\left( C_{\delta}\int_{\Omega
}\left\Vert  \nabla\tilde{w}(\phi_{j})\right\Vert  ^{2}+\delta\int_{\Omega
}|\nabla\phi_{j}|^{2}\right) \right\rangle \\
& \leq-\left\langle \int_{\Omega}|\nabla\phi_{j}|^{2}\right\rangle +C_{\delta
}\left\langle \sum_{j=1}^{n}\int_{\Omega}\phi_{j}^2\right\rangle +C\delta
\left\langle \sum_{j=1}^{n}\int_{\Omega}|\nabla\phi_{j}|^{2}\right\rangle
\text{.}%
\end{align*}
Note that the constant $C_{\delta}$ depends on $u$ and $u_{0}$ only through
their $L^{\infty}$-norm. Recalling that for all $u_{0}\in\mathcal{A}$, $0\leq
u,u_{0}\leq1$ one can assume without loss of generality (by possibly choosing
a larger constant $C$) that $C_{\delta}$ is independent on $u$ and $u_{0}$.
Choosing $\delta$ sufficiently small, and recalling that $\int_{\Omega}%
\phi_{j}{}^{2}=1$ we have
\begin{align*}
\left\langle \mathrm{Tr}(L(t,u_{0})P^{(n)})\right\rangle & \leq-\frac{1}%
{2}\left\langle \int_{\Omega}|\nabla\phi_{j}|^{2}\right\rangle +Cn\\
& =-\frac{1}{2}\left\langle \mathrm{Tr}\left( -\Delta P^{(n)}\right)
\right\rangle +Cn\text{.}%
\end{align*}
Applying \cite[Lemma 13.17]{Ro}, we get
\[
\left\langle \mathrm{Tr}(L(t,u_{0})P^{(n)})\right\rangle \leq-\frac{1}%
{2}\left\langle \mathrm{Tr}\left( -\Delta P^{(n)}\right) \right\rangle
+Cn\leq-\frac{1}{2}Cn^{\frac{d+2}{d}}+Cn\text{.}%
\]

Thus, there exists $N\in\mathbb{N}$ such that $\left\langle
\mathrm{Tr}(L(t,u_{0})P^{(n)})\right\rangle $ is negative for all $n\geq N$.
Moreover $N$ is independent of the choice of the vectors $\phi_{j}$ and of the initial
condition $u_{0}\in\mathcal{A}$. By virtue of Thm.~\ref{thfda} and
Lemma \ref{lemma unif dif} (see Appendix), we conclude that the fractal dimension of
$\mathcal{A}$ is smaller than $N$, hence finite.


\section{Exponential attractor: Proof of Theorem~\ref{thm:ea}} \label{sec:ea}

We now prove the existence of a finite-dimensional exponential attractor $\mathcal{A}^{\prime }$. The idea we follow
is presented in~\cite{EMZ} to show an analogous result in the case $g=0$. To this aim, we first need to 
prove the following:

\begin{lemma}
Let $u_{01}, u_{02} \in X$ and let $u_1, u_2$ be the solutions of~\eqref{nonlocCH} corresponding to the initial 
conditions $u_1(0)=u_{01}, u_2(0)=u_{02}$. Then, there exists $C$ independent of $t$ such that
\begin{equation} \label{eq:ast}
 \Vert u_1(t)-u_2(t) \Vert_{L^2(\Omega)} \leq C \mathrm{e}^{-t} \Vert u_{01}-u_{02} \Vert_{L^2(\Omega)} + C \Vert
 u_1-u_2 \Vert_{L^2(0,t;L^2(\Omega))},
\end{equation}
\begin{equation} \label{eq:ast2}
 \Vert u_1-u_2 \Vert_{H^1(0,t; H_1^\ast (\Omega)) \cap L^2(0,t;H^1(\Omega))} \leq C \Vert u_{01}-u_{02}
 \Vert_{L^2(\Omega)}.
\end{equation}
\end{lemma}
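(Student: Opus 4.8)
The plan is to derive the two estimates by testing the equation satisfied by the difference $u := u_1 - u_2$ with suitable test functions, exploiting the boundedness $0 \le u_i \le 1$ and the regularizing properties of the kernel in (K3). Setting $w_i = K\ast(1-2u_i)$ and $\tilde w = K\ast(-2u) = w_1 - w_2$, the difference solves, in the weak sense,
\[
\langle \dot u, \psi\rangle + (\nabla u, \nabla\psi) + (\mu(u_1)\nabla w_1 - \mu(u_2)\nabla w_2, \nabla\psi) = (g(u_1)-g(u_2), \psi)
\]
for all $\psi \in H^1(\Omega)$, with $u(0) = u_{01}-u_{02}$.

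\textbf{Step 1: the $H^1$–$L^2$ estimate \eqref{eq:ast2}.} I would test with $\psi = u$. The term $(\nabla u,\nabla u) = \|\nabla u\|_{L^2}^2$ is the good dissipative term. For the mobility term I write $\mu(u_1)\nabla w_1 - \mu(u_2)\nabla w_2 = (\mu(u_1)-\mu(u_2))\nabla w_1 + \mu(u_2)\nabla\tilde w$; since $\mu$ is Lipschitz on $[0,1]$ and $\|\nabla w_1\|_{L^\infty}$ is bounded (by (K3) with $p=\infty$ and $\|1-2u_1\|_{L^\infty}\le 1$), the first piece is controlled by $C\|u\|_{L^2}\|\nabla u\|_{L^2}$, and using (K3) with $p=2$, $\|\nabla\tilde w\|_{L^2}\le C\|u\|_{L^2}$, so the second piece is $\le C\|u\|_{L^2}\|\nabla u\|_{L^2}$ as well. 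The reaction term is $\le C\|u\|_{L^2}^2$ since $g$ is Lipschitz in $s$. Absorbing the $\|\nabla u\|_{L^2}$ factors into the dissipation via Young gives
\[
\frac{\mathrm{d}}{\mathrm{d}t}\tfrac12\|u\|_{L^2}^2 + \tfrac12\|\nabla u\|_{L^2}^2 \le C\|u\|_{L^2}^2.
\]
Gronwall on $[0,t]$ yields $\|u(t)\|_{L^2}^2 \le \mathrm{e}^{Ct}\|u_{01}-u_{02}\|_{L^2}^2$ (this is essentially (ii) of Theorem~\ref{existence and separation}), and integrating the differential inequality bounds $\|\nabla u\|_{L^2(0,t;L^2(\Omega))}$ in terms of $\|u_{01}-u_{02}\|_{L^2}$ — but only with a constant growing exponentially in $t$, which is fine for \eqref{eq:ast2} as stated (the constant there may depend on $t$, or one restricts to $t$ in a bounded interval as needed for the construction). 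The $H^1(0,t;H_1^\ast)$ bound then follows by comparison: $\dot u = \Delta u + \nabla\cdot(\dots) + (g(u_1)-g(u_2))$ in $H^1(\Omega)^\ast$, and each term on the right is bounded in $L^2(0,t;H^1(\Omega)^\ast)$ by what we already have.

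\textbf{Step 2: the exponential estimate \eqref{eq:ast}.} The point here is to get the prefactor $C\mathrm{e}^{-t}$ rather than $C\mathrm{e}^{Ct}$, at the price of the extra term $C\|u\|_{L^2(0,t;L^2)}$ on the right. I would again test with $u$, but now keep the full dissipation $\|\nabla u\|_{L^2}^2$ and, after the Young-type splittings above, arrange the inequality in the form
\[
\frac{\mathrm{d}}{\mathrm{d}t}\tfrac12\|u\|_{L^2}^2 + \|u\|_{L^2}^2 \le \big(\|u\|_{L^2}^2 - \tfrac14\|\nabla u\|_{L^2}^2\big) + C\|u\|_{L^2}^2,
\]
i.e. add and subtract $\|u\|_{L^2}^2$ to create the exponential decay factor $\mathrm{e}^{-2t}$, and treat everything that is not of order $\|\nabla u\|_{L^2}^2$ as a forcing term of the form $C\|u\|_{L^2}^2$. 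Multiplying by $\mathrm{e}^{2t}$, integrating on $[0,t]$, and using the variation-of-constants formula gives
\[
\|u(t)\|_{L^2}^2 \le \mathrm{e}^{-2t}\|u_{01}-u_{02}\|_{L^2}^2 + C\int_0^t \mathrm{e}^{-2(t-s)}\|u(s)\|_{L^2}^2\,\mathrm{d}s \le \mathrm{e}^{-2t}\|u_{01}-u_{02}\|_{L^2}^2 + C\|u\|_{L^2(0,t;L^2(\Omega))}^2,
\]
and taking square roots yields \eqref{eq:ast}. This is exactly the ``$t$-uniform'' smoothing/contraction splitting used in the Efendiev–Miranville–Zelik construction of exponential attractors: \eqref{eq:ast} says the solution map contracts exponentially up to a term that is compact (controlled in the stronger $L^2(0,t;L^2)$-in-time norm), which via \eqref{eq:ast2} and a compactness argument will give the required smoothing property.

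\textbf{Main obstacle.} The only delicate point is bookkeeping the constants: one must make sure the constant $C$ multiplying $\|u\|_{L^2}^2$ on the right of the differential inequality is genuinely independent of $t$ and of $u_{01},u_{02}$. This works because all the ``bad'' factors come from $\|\nabla w_1\|_{L^\infty}$, $\|\mu'\|_{L^\infty[0,1]}$, $\|g'\|$, and the kernel constants $r_2, r_\infty$ from (K3), none of which sees $t$ or the data thanks to $0\le u_i\le1$. The other subtlety is that \eqref{eq:ast2}, with a $t$-dependent constant, is harmless for the exponential-attractor machinery as long as one applies it on a fixed reference time step; I would state it with $C = C(t)$ (or on a fixed interval $[0,T^\ast]$) to be safe, mirroring how \eqref{eq:ast} is then iterated over time steps of length $T^\ast$.
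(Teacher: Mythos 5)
Your proposal is correct and follows essentially the same route as the paper: test the difference equation with $u_1-u_2$, use $0\le u_i\le 1$, the Lipschitz bounds on $\mu$ and $g$ and the kernel estimate (K3) to get the differential inequality, then add $\Vert u_1-u_2\Vert_{L^2(\Omega)}^2$ and apply variation of constants for \eqref{eq:ast}, and use the Gronwall bound plus comparison in $(H^1(\Omega))^\ast$ for \eqref{eq:ast2} (the paper simply cites \cite[Sec.~3.1]{MR} for the basic energy estimate you re-derive). Your observation that the constant in \eqref{eq:ast2} actually grows like $C\mathrm{e}^{Ct}$ and is only used on a fixed time step matches how the paper applies it in \eqref{eq:piu2}.
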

\begin{proof}
Equation~\eqref{eq:ast} comes from the following. 
Taking the difference of the equation~\eqref{nonloc CH 1}
for $u_{1}$ and $u_{2}$ and testing it with $u_{1}-u_{2}$ one can easily
obtain (see \cite[Sec.~3.1]{MR} for details)
\[
\frac{\mathrm{d}}{\mathrm{d}t}\left\Vert u_{1}(t)-u_{2}(t)\right\Vert
_{L^{2}(\Omega )}^{2}+\frac{1}{2}\left\Vert \nabla u_{1}(t)-\nabla
u_{2}(t)\right\Vert _{L^{2}(\Omega )}^{2}\leq C\left\Vert
u_{1}(t)-u_{2}(t)\right\Vert _{L^{2}(\Omega )}^{2}.
\]%
In particular,
\[
\frac{\mathrm{d}}{\mathrm{d}t}\left\Vert u_{1}(t)-u_{2}(t)\right\Vert
_{L^{2}(\Omega )}^{2}+\left\Vert u_{1}(t)-u_{2}(t)\right\Vert _{L^{2}(\Omega
)}^{2}\leq \left( C+1\right) \left\Vert u_{1}(t)-u_{2}(t)\right\Vert
_{L^{2}(\Omega )}^{2}
\]%
which yields, thanks to the Gronwall Lemma, 
\begin{equation}
\left\Vert u_{1}(t)-u_{2}(t)\right\Vert _{L^{2}(\Omega )}^{2}\leq \mathrm{e}%
^{-t}\left\Vert u_{1,0}-u_{2,0}\right\Vert _{L^{2}(\Omega
)}^{2}+(C+1)\int_{0}^{t}\left\Vert u_{1}-u_{2}\right\Vert _{L^{2}(\Omega
)}^{2}, \label{ee1}
\end{equation}%
that is equivalent to~\eqref{eq:ast}.
In order to derive~\eqref{eq:ast2}, we first recall that \cite[Sec.~3.1]{MR}
\begin{equation} \label{ee3}
\left\Vert u_{1}(t)-u_{2}(t)\right\Vert _{L^{2}(\Omega
)}^{2}+\int_{0}^{t}\left\Vert \nabla u_{1}-\nabla u_{2}\right\Vert
_{L^{2}(\Omega )}^{2}\leq C\mathrm{e}^{Ct}\left\Vert
u_{0,1}-u_{0,2}\right\Vert _{L^{2}(\Omega )}^{2}.
\end{equation}
Moreover, by testing the
difference of equation~\eqref{nonloc CH 1} for $u_1$ and $u_2$ with a test function $\psi \in H^{1}(0,T;H^{1}(\Omega
))$, and using the Lipschitz property of $g$ and $\mu $ and the linearity of
the convolution, we get:
\begin{eqnarray*}
&&\left\langle \partial _{t}u_{1}-\partial _{t}u_{2},\psi \right\rangle
_{H^{1}(\Omega )} \\
&=&\left( \nabla (u_{1}-u_{2})+\mu (u_{1})\nabla w(u_{1})-\mu (u_{2})\nabla
w(u_{2}),\nabla \psi \right) _{L^{2}(\Omega )}+\left( g(u_{1})-g(u_{2}),\psi
\right) _{L^{2}(\Omega )} \\
&\leq &C\left\Vert u_{1}-u_{2}\right\Vert _{H^{1}(\Omega )}\left\Vert \psi
\right\Vert _{H^{1}(\Omega )}.
\end{eqnarray*}%
Then, using this result together with equation~\eqref{ee3} we finally obtain
\begin{equation} \label{ee2}
\left\Vert \partial _{t}u_{1}-\partial _{t}u_{2}\right\Vert _{L^{2}\left(
0,t;\left( H^{1}(\Omega )\right) ^{\ast }\right)
}^{2}+\int_{0}^{t}\left\Vert u_{1}-u_{2}\right\Vert _{H^{1}(\Omega
)}^{2}\leq C\mathrm{e}^{Ct}\left\Vert u_{0,1}-u_{0,2}\right\Vert
_{L^{2}(\Omega )}^{2}.
\end{equation}%
\hspace{15cm}
\end{proof}

Now we can proceed with proving Theorem~\ref{thm:ea}.
We choose $T>0$ such that $\gamma := C \mathrm{e}^{-T} < \frac12$ and define
\begin{equation}
\begin{aligned}
 \mathcal{H}_1 &:= H^1(0,t; H_1^\ast(\Omega)) \cap L^2(0,t;H^1(\Omega)), \\
 \mathcal{H} &:= L^2(0,t;L^2(\Omega)), \\
 \mathbb{T} &: u_0 \in X \mapsto u \in \mathcal{H},
\end{aligned}
\end{equation}
where $u$ solves~\eqref{nonlocCH} with $u(0)=u_0$. 
Note that equations~\eqref{eq:ast} and~\eqref{eq:ast2} can be rewritten as
\begin{equation} \label{eq:piu}
 \Vert S(T)u_{01} - S(T)u_{02} \Vert_{L^2(\Omega)} \leq \gamma \Vert u_{01}-u_{02} \Vert_{L^2(\Omega)} + \tilde{C}
 \Vert  \mathbb{T}u_{01}-\mathbb{T}u_{02} \Vert_\mathcal{H},
\end{equation}
\begin{equation} \label{eq:piu2}
 \Vert \mathbb{T} u_{01} - \mathbb{T} u_{02} \Vert_{\mathcal{H}_1} \leq C \mathrm{e}^{C T} \Vert u_{01} - u_{02}
 \Vert_{L^2(\Omega)},
\end{equation}
for some $\tilde{C}$ independent of $T$.\\
\noindent
We now build a discrete exponential attractor for the (discrete) system $(X, S(T))$. 
Define $X_0 = \left\{ 0 \right\}$. By definition of $X$ we have that $X \subseteq \mathcal{B}_R^{L^2(\Omega)}(0)$,
where
$R = |\Omega|$ and $\mathcal{B}_R^{L^2(\Omega)}(0)$ denotes the $L^2(\Omega)$-ball of radius $R$ centered in $0$.
As a consequence of~\eqref{eq:piu2} we have
\begin{equation} \label{eq:xx}
 \mathbb{T}(X) \subseteq \mathcal{B}_{R(C \mathrm{e}^{CT})}^{\mathcal{H}_1} (\mathbb{T}(0)).
\end{equation}
Fix now $\theta>0$ such that $2(\gamma + \theta)<1$. Since $\mathcal{H}_1$ is compactly embedded in $\mathcal{H}$, 
there exists a finite number $n_1$ of $\mathcal{H}$-balls of radius $R \frac{\theta}{\tilde{C}}$ that cover $\mathbb{T}(X)$,
i.e., $\exists~X_1 = \left\{ x_1, \dots, x_{n_1} \right\} \subseteq X^{n_1}$ such that
\begin{equation} \label{eq:cross}
 \mathbb{T}(X) \subseteq \bigcup_{x \in X_1} \mathcal{B}_{R \frac{\theta}{\tilde{C}}}^{\mathcal{H}}(\mathbb{T}(x)).
\end{equation}
Without loss of generality we assume that $n_1$ is the minimum number for which~\eqref{eq:cross} holds. Note that,
as a consequence of~\eqref{eq:piu2}, $n_1$ can be estimated by the minimum number of $\mathcal{H}$-balls of radius
$R \frac{\theta}{\tilde{C}}$ necessary to cover $\mathcal{B}_{R(C \mathrm{e}^{CT})}^{\mathcal{H}_1} (0)$. Thus, $n_1
\leq N(\theta, T)$, where $N(\theta, T)$ depends on $\theta, T, \tilde{C}$ but not on $R$. \\
\noindent
We now observe that the family of $L^2(\Omega)$-balls with radius $2(\gamma + \theta)R$ and centers $S(T)x_i \in 
X_1$ covers $S(T)X$. Indeed, let $x \in S(T)X$. Then, there exists $u_0 \in X$ such that $S(T)u_0 = x$. Thanks
to~\eqref{eq:piu}, we estimate
\[
 \Vert S(T) u_0 - S(T) x_i \Vert_{L^2(\Omega)} \leq \gamma \Vert u_0 - x_i \Vert_{L^2(\Omega)} + \tilde{C} \Vert \mathbb{T}
 u_0 - \mathbb{T} x_i \Vert_\mathcal{H} \leq 2 (\gamma + \theta) R.
\]
Here we used that (recalling~\eqref{eq:cross})
\[
 \tilde{C} \mathbb{T} (u_0) \in \bigcup_{x \in X_1} \mathcal{B}_{R \theta}^{\mathcal{H}}(\mathbb{T}(x)) \subseteq
 \bigcup_{x \in  X_1} \mathcal{B}_{(\gamma + \theta) R}^{\mathcal{H}}(\mathbb{T}(x)).
\]
Hence, 
\[
 S(T) X \subseteq \bigcup_{x \in X_1} \mathcal{B}_{2 (\gamma + \theta) R}^{L^2(\Omega)}(x).
\]
Applying the same procedure to each ball in the covering $\left\{ \mathcal{B}_{2 (\gamma + \theta) R}^{L^2(\Omega)}(x) : x \in X_1 \right\}$
we can find a new covering of $S(T)^2X$ with at most $N^2(\theta,T)$ $L^2(\Omega)$-balls of radius $(2(\gamma + \theta))^2R$,
i.e., there exists $X_2$, $\#X_2 \leq N^2(\theta,T)$, such that
\[
 S(2T)X = S(T)^2 X \subseteq \bigcup_{x \in X_2} \mathcal{B}_{(2 (\gamma + \theta))^2 R}^{L^2(\Omega)}(x).
\]
Iterating this argument for all $K \in \mathbb{N}$ we can show that there exists a set $X_K \in X$ such that
$\#X_K \leq N^K(\theta, T)$ and
\[
 S(KT)X = S(T)^K X \subseteq \bigcup_{x \in X_K} \mathcal{B}_{(2 (\gamma + \theta))^K R}^{L^2(\Omega)}(x).
\]
In particular,
\[
 d_{L^2(\Omega)} \left( S(T)^K X,X_K \right) \leq R \left( 2(\gamma + \theta) \right)^K.
\]
The set $\mathcal{A}_d=\bigcup_{K=1}^{+\infty} X_K$ is then an exponential attractor for the discrete system $(S(T),X)$. \\
\noindent
Moreover, $\mathcal{A}_d$ has finite fractal dimension. Indeed, fix $\varepsilon>0$ and let$N_\varepsilon$ be the minimum
number of $L^2(\Omega)$-balls of radius $\varepsilon$ needed to cover $\mathcal{A}_d$.
Let $K \in \mathbb{N}$ be the integer part of $\log_{2(\gamma+\theta)}(\varepsilon) = \frac{\log \varepsilon}{\log (2(\gamma+\theta))}$,
i.e., $(2(\gamma+\theta))^{K+1} < \varepsilon \leq (2(\gamma+\theta))^K$.
Thus, $N_\varepsilon \leq N_{(2(\gamma+\theta))^{K+1}}=(N(\theta,T))^{K+1}$.
Consequently
\[
 \underset{\varepsilon \to 0}{\operatorname{limsup}} \frac{\log N_\varepsilon}{\log \frac{1}{\varepsilon}} \leq \frac{(K+1)\log N(\theta, T)}{-\log \varepsilon}
 =-\frac{\log N(\theta,T)}{\log 2(\gamma+\theta)} < +\infty.
\]
As a consequence of Definition~\ref{deffracdim}, $d_{\mathrm{frac}} \mathcal{A}_d$ is finite.\\
\noindent
Thus, there exists a discrete exponential attractor $%
\mathcal{A}_{d}$, i.e.,
\[
d_{L^{2}(\Omega )}(\left( S(T)\right) ^{n}u_{0},\mathcal{A}%
_{d})=d_{L^{2}(\Omega )}(S(nT)u_{0},\mathcal{A}_{d})\leq C\mathrm{e}^{-nc}%
\text{ for all }u_{0}\in X
\]%
for some positive constant $C$. We now construct the exponential attractor
for the semigroup $S(t)$. To this aim, we define
\[
\mathcal{A}^{\prime }=U_{t\in \lbrack 0,T]}S(t)\mathcal{A}_{d}.
\]%
For all $t>0$ we can find $n$ such that $t=nT+\tilde{t}$ where $0\leq \tilde{%
t}<T$. For all $\varepsilon >0$ we can
find $a_{n,u_{0},\varepsilon }\in \mathcal{A}_{d}$ such that
\[
\left\Vert S(nT)u_{0}-a_{n,u_{0},\varepsilon }\right\Vert _{L^{2}(\Omega
)}\leq \left( C+\varepsilon \right) \mathrm{e}^{-nc}.
\]%
By virtue of estimate (\ref{ee3}), we have that
\[
\left\Vert S(t)u_{0}-S(\tilde{t})a_{n,u_{0},\varepsilon }\right\Vert
_{L^{2}(\Omega )}\leq C\mathrm{e}^{C\tilde{t}}\left\Vert
S(nT)u_{0}-a_{n,u_{0},\varepsilon }\right\Vert _{L^{2}(\Omega )}\leq C%
\mathrm{e}^{CT}\left( C+\varepsilon \right) \mathrm{e}^{-nc}.
\]%
Since $T$ is fixed, by suitably renaming the constants we have%
\[
\left\Vert S(t)u_{0}-S(\tilde{t})a_{n,u_{0},\varepsilon }\right\Vert
_{L^{2}(\Omega )}\leq \left( C+\varepsilon \right) \mathrm{e}^{-ct}.
\]%
Since, by construction $S(\tilde{t})a_{n,u_{0},\varepsilon }\in \mathcal{A}%
^{\prime }$, by the arbitrariness of $\varepsilon $ we conclude that $%
\mathcal{A}^{\prime }$ is an exponential attractor for $S(t)$.

\section{Equilibria: Proof of Theorem~\ref{thm:eq}}

\subsection{Existence}

Since we are looking for solutions $0\leq u\leq1$, we can assume without loss of generality
(and using hypothesis (G))
\begin{align}
 & \mu(s)=0 \hspace{.3cm} \forall s\notin [0,1] \label{ass mu}\\
 & g(s)=g(0) \geq 0 \text{ for } s<0 \text{ and } g(s)=g(1) \leq 0 \text{ for } s>1 \qquad \text{a.e.~in }\Omega. \label{ass g} 
\end{align}
%
The proof is based on a regularization procedure and a fixed point argument.
We first consider the regularization problem parametrized by small $\varepsilon>0$:
\begin{subequations}
\label{eqeqreg}
\begin{align}
-\Delta u-\nabla\cdot(\mu\nabla w)+\varepsilon u  &  =g(u), \label{reg eq}\\
n\cdot(\mu\nabla w+\nabla u)  &  =0\text{,}\\
w  &  =K\ast(1-2u). \label{reg eq 3}%
\end{align}
\end{subequations}
We define then the mapping $\Gamma:z\in L^{2}(\Omega)\mapsto u\in L^{2}\left(  \Omega\right)$
where $u$ is weak solution to
\begin{equation}
\label{fix point eq}%
-\Delta u+\varepsilon u=\nabla\cdot(\mu(z)\nabla(K\ast(1-2z)))+g(z)\text{.}
\end{equation}
The hypotheses on $\mu$, $g$ and $K$ ensure 
that the right hand side of~\eqref{fix point eq} is in $\left(  H^{1}(\Omega)\right)  ^{\ast}$
and $\Gamma$ is well defined thanks to the Lax-Milgram theorem.
In order to prove the existence of a fixed point, we apply Schaefer's fixed point Theorem (Thm.~\ref{Sch},
see Appendix). The continuity of $\Gamma$ is again guaranteed by the 
assumptions on $K$ and the fact that $\mu$ and $g$ are Lipschitz and bounded.
Compactness of $\Gamma$ can be proved by testing~\eqref{fix point eq} with $u$
and using \mbox{$\Vert (\mu(z)\nabla(K\ast
(1-2z)))+g(z)\Vert _{L^{2}(\Omega)}\leq C\Vert z\Vert _{L^{2}(\Omega)}+C$}
to obtain the estimate
\[
\Vert u\Vert _{H^{1}(\Omega)}\leq C_{\varepsilon}\Vert z\Vert _{L^{2}(\Omega)}+C_{\varepsilon}\text{.}%
\]
In order to apply Schaefer's fixed point theorem, we are now only left with proving that the set $\{u\in L^{2}\left(
\Omega\right)  :u=\alpha \Gamma(u)~$for some $\alpha
\in\lbrack0,1]\}$ is bounded. This is equivalent to showing that the set
\[
A:=\{u\in
L^{2}\left(  \Omega\right)  :u/\alpha=\Gamma(u) \text{ for some } \alpha\in(0,1]\}
\]
is bounded. To this end, let $u\in A$. Then, there exists $\alpha\in(0,1]$ such that
\begin{equation}
-\Delta u/\alpha+\varepsilon u/\alpha=\nabla\cdot(\mu(u)\nabla(K\ast(1-2u)))+g(u).
\label{alpha eq}%
\end{equation}
By testing the equation with $\alpha(u-1)^{+}$ and using assumptions~\eqref{ass mu}-\eqref{ass g}, we get%
\[
\int_{u\geq1}|\nabla u|^{2}+\varepsilon\int_{u\geq1}\left(  u^{2}-u\right)
=-\alpha\int_{u\geq1}\mu\nabla w\nabla u+\alpha\int_{u\geq1}g(u)(u-1)\leq0.
\]
This yields $u\leq1$ a.e.~in $\Omega$ for every $u\in A$. Similarly, it can be shown that
$u\geq0$ a.e.~in $\Omega$ for every $u\in A$.
This implies that $A$ is bounded in $L^{2}(\Omega)$ for every $\varepsilon$ fixed.
Thus, the mapping $\Gamma$ has a fixed point $u$ which
solves~\eqref{eqeqreg} and such that $0\leq u\leq1$ a.e. in $\Omega$.

\noindent
Our final step is the passage to the limit $\varepsilon \to 0$.
Let $u_{\varepsilon}$ be a solution of the regularized problem~\eqref{eqeqreg}; hence,
$0\leq u_{\varepsilon}\leq1$. Test~\eqref{reg eq} with $u_{\varepsilon}$, and estimate%
\begin{align*}
\int_{\Omega}|\nabla u_{\varepsilon}|^{2}+\varepsilon|u_{\varepsilon}|^{2}  &
=\int_{\Omega}\mu(u_{\varepsilon})\nabla w_{\varepsilon}\nabla u_{\varepsilon}%
+\int_{\Omega}g(u_{\varepsilon})u_{\varepsilon}\\
&  \leq C+C\Vert \nabla u_{\varepsilon}\Vert _{L^{2}(\Omega)}\text{.}%
\end{align*}
This implies $\Vert \nabla u_{\varepsilon}\Vert _{L^{2}(\Omega)}\leq C$. Thanks to
$0\leq u_{\varepsilon}\leq1$, we get $\Vert u_{\varepsilon}\Vert _{H^{1}(\Omega)}\leq C$, where $C$
is independent of $\varepsilon$. Consequently, for a not relabeled subsequence
we get
\begin{align*}
u_{\varepsilon}  &  \rightarrow u\text{ weakly in }H^{1}\left(  \Omega\right), \\
u_{\varepsilon}  &  \rightarrow u\text{ strongly in }L^{2}\left(  \Omega\right)
\ \text{and pointwise a.e. in }\Omega\text{.}%
\end{align*}
Moreover, using the continuity and boundedness of $\mu$ and $g$ we have
\begin{align*}
\mu(u_{\varepsilon})  &  \rightarrow\mu(u)\text{ strongly in }L^{2}\left(
\Omega\right), \\
g(u_{\varepsilon})  &  \rightarrow g(u)\text{ strongly in }L^{2}\left(
\Omega\right).
\end{align*}
Finally, thanks to the continuity of the convolution and assumption~\eqref{K3},%
\begin{align*}
w_{\varepsilon}=K\ast(1-2u_{\varepsilon}) &\rightarrow w=K\ast(1-2u)\text{ strongly in
}H^{1}(\Omega), \\
\mu(u_{\varepsilon})\nabla w_{\varepsilon} &\rightarrow \mu(u)\nabla w\text{ weakly in
}H^{1}(\Omega)\text{. }%
\end{align*}
This allows us to pass to the limit in the weak formulation of~\eqref{eqeqreg}
and prove that $u$ solves~\eqref{eqeq} as well as $0\leq u\leq1$.

\begin{remark} \label{rem:nue}
Uniqueness of equilibrium points is not guaranteed. As an example,
consider a function $g$ such that $g(0)=g \left( \frac{1}{2} \right) = g(1)=0$. The
constant functions $u^{\ast}(x)=0$, $u^{\ast}(x)= \frac{1}{2}$ and
$u^{\ast}(x)=1$ are then all possible solutions of~\eqref{eqeq}.
\end{remark}

\subsection{Convergence to equilibria}

As stated in the introduction, many known results about convergence to 
equilibria for the reaction-free Cahn-Hilliard equation rely on the existence
of a Lyapunov functional~\cite{AW, LP1, LP2}. The presence of 
a reaction term, however, does not allow us to apply the same techniques.
Therefore, we adopt two different strategies to obtain the convergence to equilibria in
the following cases:
\begin{enumerate}
\item $g$ has sign, i.e. $g\geq0$, or $g\leq0$ (more precisely, case $1$ of Theorem~\ref{thm:eq})

\item $g$ is monotone decreasing and $g^{\prime}\leq-\lambda$ for a
sufficiently large positive constant $\lambda$ (case $2$ of Theorem~\ref{thm:eq}).
\end{enumerate}

\noindent
In the first case, we observe that the total mass is monotone as a function of time. Combined with the
boundedness of $u$, this proves the convergence of the solution to one of the pure phases.
In the second case we exploit a linearization technique around the equilibrium
point. \\
\noindent
We emphasize that these conditions include a wide spectrum of
reaction terms notably relevant in applications, such as~\eqref{bio react term}-\eqref{cho}.  In
particular we prove convergence to equilibria for the
Cahn-Hilliard-Bertozzi~\cite{BEG} and the Cahn-Hilliard-Oono~\cite{BO} equations, as well as for a Cahn-Hilliard
equation coupled with a logistic-type reaction term~\cite{KS}.

\subsubsection{Case 1.}
We outline the details for the convergence to $u=1$. A similar approach can be applied
to show the second part of the theorem (i.e., the convergence to $u=0$).
Hypothesis~\eqref{hA1} and Theorem~\ref{existence and separation} guarantee
that $u$ separates from $0$ uniformly in time, namely
$\forall~T_0\ \exists~k_1(\bar{u}_{0},T_{0}) > 0$ such that $u(t)\geq k_1$ for all
$t\geq T_{0}$ a.e.~\mbox{in $\Omega$}.\footnote{ 
Note that $k_{1}$ is strictly positive for every $u_{0}$ if
$g(x,0)>0$ on a set of positive measure, and for every $u_{0}\neq0$ (a.e.~in
$\Omega$) if $g(x,0)=0$ a.e.~in $\Omega$.}
Using hypothesis~\eqref{hA2} with $k_1=\kappa$
and defining $r_\kappa (s) := -m_\kappa(s-1)$, we have that $g(x,s) \geq r_\kappa(s) \ \forall s \in \left[ \kappa, 1
\right]$. \\
\noindent
Testing equation~\eqref{nonloc CH 1} with $\frac{1}{|\Omega|}$, we get
\[
\frac{\mathrm{d}}{\mathrm{d}t} \bar{u} = \frac{1}{|\Omega|}\int_{\Omega}g(x,u)
\geq \frac{1}{|\Omega|} \int_\Omega r_\kappa(u)=r_\kappa \left( \bar{u} \right).
\]
Recalling $r_\kappa(s)=-m_\kappa(s-1)$, we can rewrite the last inequality as
$\frac{\mathrm{d}}{\mathrm{d}t} \left( \bar{u}-1 \right) \geq -m_\kappa \left( \bar{u}-1 \right).$
Since $0 \leq u \leq 1$ (cf.~Theorem~\ref{existence and separation}), we have then
$0 \geq \bar{u}(t)-1 \geq \left( \bar{u}_0 - 1 \right) \exp (-m_\kappa t)$. Hence,
$\bar{u}(t) \to 1$. Finally, applying Lemma~\ref{l2 convergence} to $U=1-u$ we get
$u(t)\rightarrow1$ in $L^{2}(\Omega)$. 

This proves $u(t)\rightarrow1$ exponentially fast in $L^{2}(\Omega)$ for every $u_{0} \in X$ if the measure of 
the set $\{x \in X \text{ s.t. } g(x,0) > 0\}$ is positive and for
every $u_{0}\neq0$ if $g(\cdot,0)=0$. We recall that, if $g(0)=0$,
$u=0$ is an equilibrium point.

The previous result can be applied to show the convergence to equilibria for the 
Cahn-Hilliard equation with logistic reaction term~\eqref{bio react term} (cf. Corollary~\eqref{cor:log}).

\vspace{1cm}

\begin{figure}[!ht]
\label{fig:gcaso1}
\centering
\includegraphics[scale=1.2]{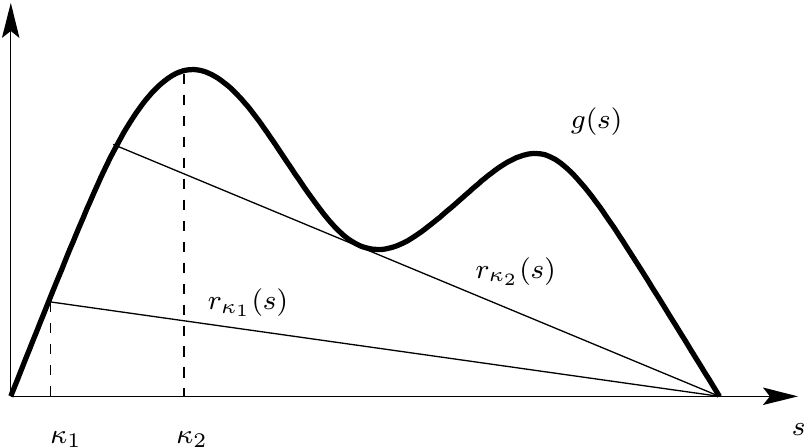}
\caption{Illustration of $r_\kappa$ for a reaction term $g$ satisfying~\eqref{hA1}, \eqref{hA2}.}
\end{figure}

\subsubsection{Case 2.}
Let $u$ be a solution to~\eqref{nonloc CH 1} and $u^{\ast}$ be an equilibrium point.
Taking the difference between equations~\eqref{nonloc CH 1} and \eqref{equilibrium equation}
and testing it with $U=u-u^{\ast}$ gives
\begin{equation}
\frac{\mathrm{d}}{\mathrm{d}t}\frac{1}{2}\Vert U\Vert _{L^{2}(\Omega)}^{2}+\Vert \nabla
U\Vert _{L^{2}(\Omega)}^{2}+\int_{\Omega}\left(  \mu\nabla w-\mu^{\ast}\nabla
w^{\ast}\right)  \nabla U=\int_{\Omega}(g(u)-g(u^{\ast}))U\text{,} \label{qq}%
\end{equation}
where $\mu^{\ast}=\mu(u^{\ast})$ and $w^{\ast}=K\ast(1-2u^{\ast})$. Thanks to~\eqref{K3},
we estimate
\begin{align*}
\int_{\Omega}\left(  \mu\nabla w-\mu^{\ast}\nabla w^{\ast}\right)  \nabla U
&  =\int_{\Omega}\left(  \mu\nabla\left(  w-w^{\ast}\right)  \right)  \nabla
U+\int_{\Omega}\left(  \mu-\mu^{\ast}\right)  \nabla w^{\ast}\nabla U\\
&  \leq\frac{1}{4}r_{2}\Vert U\Vert _{L^{2}(\Omega)}\Vert \nabla U\Vert _{L^{2}(\Omega)}%
+\int_{\Omega}U(1-u-u^{\ast})\nabla w^{\ast}\nabla U\\
&  \leq \left( \frac{r_2}{4} + r_\infty \right) \Vert U \Vert_{L^2(\Omega)} \Vert \nabla U \Vert_{L^2(\Omega)}\\
& \leq \frac12 \Vert \nabla U \Vert_{L^2(\Omega)}^2 + \frac12 \left( \frac{r_2}{4} + r_\infty \right)^2
\Vert U \Vert_{L^2(\Omega)}^2.%
\end{align*}
Here we used the fact that $\underset{s \in  [0,1]}{\text{sup}} \mu(s) \leq \frac14$.
By assumption~\eqref{hC} we have
\[
\int_{\Omega}(g(u)-g(u^{\ast}))U\leq-\lambda\Vert U\Vert _{L^{2}(\Omega)}^{2}\text{.}%
\]
Thus, by substituting into~\eqref{qq}, we get
\[
\frac{\mathrm{d}}{\mathrm{d}t}\frac{1}{2}\Vert U\Vert _{L^{2}(\Omega)}^{2}+\frac{1}%
{2}\Vert \nabla U\Vert _{L^{2}(\Omega)}^{2}\leq\left(  C_{1}-\lambda\right)
\Vert U\Vert _{L^{2}(\Omega)}^{2},%
\]
where $c_1 = \frac12 \left( \frac{r_2}{4} + r_\infty \right)^2$.
By applying the Gronwall Lemma one gets%
\[
\Vert U(t)\Vert _{L^{2}(\Omega)}^{2}\leq e^{-2\left(  C_{1}-\lambda\right)  t}%
\Vert u_{0}-u^{\ast}\Vert _{L^{2}(\Omega)}^{2}\text{.}%
\]
Thus, for $\lambda>C_{1}$ we have $U(t)\rightarrow0$ in $L^{2}(\Omega)$. This implies $u(t) \to u^*$ in
$L^{2}(\Omega)$.


\section*{Appendix}

In the following we include some auxiliary results we use throughout our proofs.

\begin{theorem}[{Schaefer \cite[Thm.4, Sec.9.2]{Ev}}]
\label{Sch}
Let $X$ be Banach and
let \mbox{$\Gamma:X\rightarrow X$} be continuous, compact, and such that the set $\{x\in
X:x=\alpha \Gamma(x)~$for some $\alpha\in\lbrack0,1]\}$ is bounded. Then, $\Gamma$ has a
fixed point.
\end{theorem}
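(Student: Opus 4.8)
The plan is to deduce this from Schauder's fixed point theorem by composing $\Gamma$ with a radial retraction onto a sufficiently large closed ball. Let $A=\{x\in X:x=\alpha\Gamma(x)\text{ for some }\alpha\in[0,1]\}$ and, using the hypothesis that $A$ is bounded, fix $M>0$ with $\|x\|<M$ for every $x\in A$ (strict inequality, obtained simply by enlarging any bound for $A$). Denote by $\overline{B}_M$ the closed ball of radius $M$ about $0$, and define the radial retraction $r:X\to\overline{B}_M$ by $r(x)=x$ if $\|x\|\le M$ and $r(x)=Mx/\|x\|$ if $\|x\|>M$; this map is continuous and takes values in $\overline{B}_M$.

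First I would set $\Gamma'=r\circ\Gamma:\overline{B}_M\to\overline{B}_M$. It is continuous as a composition of continuous maps, and it is compact: since $\Gamma$ is compact, $\overline{\Gamma(\overline{B}_M)}$ is compact, hence its continuous image under $r$ has compact closure, so $\overline{\Gamma'(\overline{B}_M)}$ is compact. Since $\overline{B}_M$ is a nonempty closed convex subset of the Banach space $X$ and $\Gamma'$ is a continuous self-map with relatively compact image, Schauder's fixed point theorem applies and yields $x^{*}\in\overline{B}_M$ with $x^{*}=r(\Gamma(x^{*}))$.

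The remaining step is to upgrade this to a genuine fixed point of $\Gamma$. Suppose, for contradiction, that $\|\Gamma(x^{*})\|>M$. Then by definition of $r$ we have $x^{*}=r(\Gamma(x^{*}))=\alpha\Gamma(x^{*})$ with $\alpha=M/\|\Gamma(x^{*})\|\in(0,1)$, so $x^{*}\in A$ and therefore $\|x^{*}\|<M$. On the other hand $\|x^{*}\|=\|r(\Gamma(x^{*}))\|=M$, a contradiction. Hence $\|\Gamma(x^{*})\|\le M$, so $r(\Gamma(x^{*}))=\Gamma(x^{*})$ and $x^{*}=\Gamma(x^{*})$ is the desired fixed point.

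The only genuinely delicate points are (i) choosing $M$ so that the inequality defining the bound on $A$ is strict, which is exactly what makes the final contradiction close, and (ii) verifying the hypotheses of Schauder's theorem for $\Gamma'$ — convexity of $\overline{B}_M$ is automatic, and relative compactness of $\Gamma'(\overline{B}_M)$ is immediate from compactness of $\Gamma$ together with continuity of $r$. Since this is the classical Leray--Schauder/Schaefer alternative, one could alternatively simply cite the proof in \cite[Sec.~9.2]{Ev}; the argument sketched above is essentially that proof.
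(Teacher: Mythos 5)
Your argument is correct: the radial-retraction-plus-Schauder proof, with the strict bound $M$ on the set $A$ feeding the final contradiction, is exactly the classical Leray--Schauder/Schaefer argument. The paper itself offers no proof of this statement but simply cites \cite[Thm.~4, Sec.~9.2]{Ev}, and your proof is essentially the one given there, so there is nothing to compare beyond noting the agreement.
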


\begin{lemma}[{Uniform Gronwall Lemma, \cite[Ex.11.2]{Ro}}]
\label{ugl}
Let $\eta$ be an absolutely continuous nonnegative
function on $[t_{0},+\infty)$ and $\phi$,$\psi\in L_{loc}^{1}(t_{0},+\infty)$
two a.e. nonnegative functions such that
\[
\dot{\eta}(t)\leq\phi(t)\eta(t)+\psi(t)\text{ a.e. in }[t_{0},+\infty)
\]
with
\begin{align*}
\int_{t}^{t+r}\phi(s)ds  &  \leq a_{1},~\int_{t}^{t+r}\psi(s)ds\leq
a_{2},~\int_{t}^{t+r}\eta(s)ds\leq a_{3}
\end{align*}
for some positive constants $r$ and $a_{j}$, $j=1,2,3$, and $\forall t  \geq t_{0}$. Then,
\[
\eta(t)\leq\left(  \frac{a_{3}}{r}+a_{2}\right)  e^{a_{1}} \hspace{.7cm} \forall
t\geq t_{0}+r\text{.}%
\]
\end{lemma}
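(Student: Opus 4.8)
The plan is to reduce the statement to the classical (differential) Gronwall inequality applied on short intervals of length $r$, and then to average over the left endpoint of such an interval.

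First I would fix $t \geq t_0 + r$ and, for each $s \in [t-r, t]$, apply the standard Gronwall lemma to the differential inequality $\dot{\eta} \leq \phi\eta + \psi$ on the interval $[s,t]$. This gives
\[
\eta(t) \leq \eta(s)\exp\!\left(\int_s^t \phi(\sigma)\,\mathrm{d}\sigma\right) + \int_s^t \psi(\tau)\exp\!\left(\int_\tau^t \phi(\sigma)\,\mathrm{d}\sigma\right)\mathrm{d}\tau.
\]
Since $[s,t] \subseteq [t-r, t]$ and $\phi,\psi \geq 0$ a.e., the hypotheses (applied at the time $t-r \geq t_0$) yield $\int_s^t \phi \leq \int_{t-r}^{t}\phi \leq a_1$ and, likewise, $\int_\tau^t \phi \leq a_1$ for every $\tau \in [s,t]$ and $\int_s^t \psi \leq a_2$. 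Hence
\[
\eta(t) \leq e^{a_1}\bigl(\eta(s) + a_2\bigr) \qquad \text{for all } s \in [t-r, t].
\]

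Next I would integrate this inequality with respect to $s$ over $[t-r,t]$. The left-hand side does not depend on $s$, so it contributes $r\,\eta(t)$; on the right-hand side, $\int_{t-r}^t \eta(s)\,\mathrm{d}s \leq a_3$ by hypothesis (again applied at time $t-r$), while $\int_{t-r}^t a_2\,\mathrm{d}s = r a_2$. Therefore $r\,\eta(t) \leq e^{a_1}(a_3 + r a_2)$, which is precisely the claimed bound $\eta(t) \leq \left(\frac{a_3}{r} + a_2\right)e^{a_1}$ for every $t \geq t_0 + r$.

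There is no serious obstacle here; the only points requiring a little care are the integrability justifications needed to invoke the differential Gronwall lemma in integrated form (guaranteed by $\eta$ being absolutely continuous and $\phi,\psi \in L^1_{\mathrm{loc}}$) and the bookkeeping ensuring that every integral of $\phi$ or $\psi$ appearing above is taken over a subinterval of some $[t',t'+r]$ with $t' \geq t_0$, so that the uniform bounds $a_1,a_2,a_3$ genuinely apply. Since all integrands are nonnegative, enlarging the domain of integration only increases the integral, which is what makes these estimates legitimate.
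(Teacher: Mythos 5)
Your argument is correct and is precisely the classical proof of the uniform Gronwall lemma: apply the standard differential Gronwall inequality on $[s,t]$ for each $s\in[t-r,t]$, use the nonnegativity of $\phi,\psi$ to enlarge the integration intervals so the uniform bounds $a_1,a_2$ apply, and then average over $s$ to bring in $a_3$. The paper itself gives no proof, citing the result from \cite[Ex.~11.2]{Ro}, and your derivation is exactly the standard argument behind that citation, with the integrability and bookkeeping points handled appropriately.
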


\begin{lemma}[$L^2$-convergence] \label{l2 convergence}
Let $u\in C\left(  [0,T],L^{2}(\Omega)\right)  $, $0\leq
u\leq1$ a.e.~in $\Omega\times\lbrack0,+\infty)$, and $\bar{u}(t)=\int_{\Omega
}u(t)\rightarrow0$ for $t\rightarrow+\infty$. Then, $u(\cdot,t)\rightarrow0$
in $L^{2}(\Omega)$. Moreover, if $\bar{u}(t) \to 0$ exponentially fast,
then also $\Vert u(t) \Vert_{L^2(\Omega)} \to 0$ exponentially fast.
\end{lemma}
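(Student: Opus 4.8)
The plan is to exploit the pointwise constraint $0\le u\le 1$, which forces $u(x,t)^2\le u(x,t)$ for a.a.\ $x\in\Omega$ and all $t$. First I would integrate this elementary inequality over $\Omega$ to obtain
\[
\|u(t)\|_{L^2(\Omega)}^2=\int_\Omega u(t)^2\le \int_\Omega u(t).
\]
With the normalization $\bar z=\frac{1}{|\Omega|}\int_\Omega z$ this reads $\|u(t)\|_{L^2(\Omega)}^2\le |\Omega|\,\bar u(t)$. Hence the hypothesis $\bar u(t)\to 0$ as $t\to+\infty$ immediately gives $\|u(t)\|_{L^2(\Omega)}^2\to 0$, i.e.\ $u(\cdot,t)\to 0$ in $L^2(\Omega)$.

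For the second assertion, suppose $\bar u(t)\le C\mathrm{e}^{-ct}$ for some $C,c>0$ and all large $t$. Plugging this into the bound above yields $\|u(t)\|_{L^2(\Omega)}^2\le |\Omega|\,C\,\mathrm{e}^{-ct}$, so that $\|u(t)\|_{L^2(\Omega)}\le (|\Omega|C)^{1/2}\mathrm{e}^{-ct/2}$; thus $\|u(t)\|_{L^2(\Omega)}$ decays exponentially, at half the rate of $\bar u$. This is exactly the form in which the lemma is invoked in the proof of Case~1 of Theorem~\ref{thm:eq}, applied to $U=1-u$ (note that $0\le U\le 1$ and $\bar U(t)=1-\bar u(t)\to 0$ exponentially there).

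I do not expect any genuine obstacle here: the argument is purely pointwise in time, so the continuity hypothesis $u\in C([0,T],L^2(\Omega))$ is not even needed, and the only point requiring minor care is keeping track of the constant $|\Omega|$ coming from the normalization in the definition of $\bar u$.
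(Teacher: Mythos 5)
Your proof is correct, and it is in fact a streamlined version of the paper's argument. The paper works with the deviation $u-\bar u$: it first notes $\int_\Omega (u-\bar u)^+=\int_\Omega(u-\bar u)^-\le |\Omega|\,\bar u(t)$, deduces $\Vert u(t)-\bar u(t)\Vert_{L^1(\Omega)}\le 2|\Omega|\,\bar u(t)$, and then passes from $L^1$ to $L^2$ using the pointwise bound $|u-\bar u|\le 1$ — the same $L^\infty$-interpolation mechanism you use, just applied to $u-\bar u$ instead of to $u$. Since the target limit here is $0$ (not a nonzero mean value), your direct estimate $\Vert u(t)\Vert_{L^2(\Omega)}^2=\int_\Omega u^2\le\int_\Omega u=|\Omega|\,\bar u(t)$, based on $u^2\le u$ for $0\le u\le 1$, makes the decomposition and the positive/negative-part identity unnecessary, so your route is shorter and slightly more elementary. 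You also make explicit what the paper leaves implicit: the exponential decay of $\bar u$ transfers to $\Vert u(t)\Vert_{L^2(\Omega)}$ with half the rate, which is exactly the form needed in Case 1 of the convergence theorem (applied there to $1-u$). Your observation that the continuity-in-time hypothesis plays no role in the argument is also accurate; the only bookkeeping issue is the factor $|\Omega|$ coming from the paper's inconsistent use of $\bar u$ (normalized average in (U0), unnormalized integral in the lemma statement), which you handle correctly and which does not affect the conclusion.
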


\begin{proof}
We start by observing that 
$$
\int_\Omega (u(t)-\bar{u}(t))^+=\int_\Omega (u(t)-\bar{u}(t))^- \leq |\Omega| \bar{u}(t) \to 0.
$$
This implies that $\Vert u(t)-\bar{u}(t) \Vert_{L^1(\Omega)}\leq 2|\Omega| \bar{u}(t) \to 0$. Since $|u(t)-\bar{u}(t)| \leq 1$ a.e. in $\Omega$, we have that 
$$
\Vert u\left(  t\right)  -\bar{u}\left(  t\right)  \Vert _{L^{2}(\Omega)}^{2}  = \int_\Omega |u(t)-\bar{u}(t)|^2 \leq
\int_\Omega |u(t)-\bar{u}(t)| \leq 2|\Omega| \bar{u}(t) \to 0.
$$
This concludes the proof of the lemma.
\end{proof}


\begin{lemma}[Linearized equation] \label{lineq}
Let $u_{0},v_{0}\in\mathcal{A}$. There exists a
unique solution $U$ to the equation
\begin{align}
\dot{U}-\Delta U-\nabla\cdot\left( \mu^{\prime}(u)\nabla w(u)U+\mu
(u)\nabla\tilde{w}(U)\right) -g^{\prime}(u)U & =0\text{ in }\Omega
\times(0,\infty)\text{,} \label{E1} \\
(\nabla U+\mu^{\prime}(u)\nabla w(u)U+\mu(u)\nabla\tilde{w}(U))\cdot n &
=0\text{ on }\partial\Omega\times(0,\infty)\text{,}\\
U(0) & =v_{0}-u_{0}\text{,} \label{E2}%
\end{align}
where $\tilde{w}(U)=K\ast(-2U)$ with the following regularity:%
\begin{equation}
U\in H^{1}(0,T;\left( H^{1}(\Omega)\right) ^{\ast})\cap L^{2}(0,T;H^{1}%
(\Omega))\text{ for all }T>0.\label{regU}%
\end{equation}
Moreover, for all $t>0$ there exists a constant $c:=c(t)$ independent of
$u_{0}$ and $v_{0}$ such that
\begin{equation}
\left\Vert  \nabla U\right\Vert  _{L^{2}(\Omega)}\leq c\label{bb}.%
\end{equation}
\end{lemma}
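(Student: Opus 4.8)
The plan is to proceed by the standard Faedo–Galerkin approach, exactly as for a linear parabolic equation with bounded (in fact $L^\infty$ in space and time) coefficients, and then to upgrade the energy estimate to the gradient bound~\eqref{bb} using a Uniform Gronwall argument. First I would set up a Galerkin scheme: let $\{e_k\}$ be the eigenfunctions of $-\Delta$ with Neumann boundary conditions, normalized in $L^2(\Omega)$, and seek $U_m(t)=\sum_{k=1}^m c_k^m(t)e_k$ solving the projected version of~\eqref{E1}–\eqref{E2} with $U_m(0)=P_m(v_0-u_0)$. Since $u(t)=S(t)u_0$ with $u_0\in\mathcal{A}$, Theorem~\ref{existence and separation}(iii) gives $u\in L^\infty(t_0,T;H^2(\Omega))$ for every $0<t_0<T$; moreover $0\le u\le1$, so $\mu(u)$, $\mu'(u)$, $g'(u)$ are all bounded, and by~\eqref{K3} the terms $\nabla w(u)=K\ast(-2\nabla u)$ and $\nabla\tilde w(U)=K\ast(-2U)$ are controlled in $L^\infty(\Omega)$ and $L^p(\Omega)$ respectively. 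Hence the ODE system for $c^m$ has locally Lipschitz (indeed linear) right-hand side and admits a global solution.

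Next I would derive the basic energy estimate. Testing the Galerkin equation with $U_m$ gives
\[
\frac{\mathrm{d}}{\mathrm{d}t}\frac12\|U_m\|_{L^2(\Omega)}^2+\|\nabla U_m\|_{L^2(\Omega)}^2
=-\int_\Omega\bigl(\mu'(u)\nabla w(u)U_m+\mu(u)\nabla\tilde w(U_m)\bigr)\nabla U_m+\int_\Omega g'(u)U_m^2.
\]
Using $\|\mu'(u)\nabla w(u)\|_{L^\infty(\Omega)}\le C$, $\|\mu(u)\|_{L^\infty(\Omega)}\le\frac14$, assumption~\eqref{K3} to bound $\|\nabla\tilde w(U_m)\|_{L^2(\Omega)}\le 2r_2\|U_m\|_{L^2(\Omega)}$, Hölder and Young's inequality, one absorbs $\tfrac12\|\nabla U_m\|_{L^2(\Omega)}^2$ on the left and is left with $\tfrac{\mathrm{d}}{\mathrm{d}t}\|U_m\|_{L^2(\Omega)}^2+\|\nabla U_m\|_{L^2(\Omega)}^2\le C\|U_m\|_{L^2(\Omega)}^2$. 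Gronwall then yields, since $\|U_m(0)\|_{L^2(\Omega)}\le\|v_0-u_0\|_{L^2(\Omega)}$ is bounded on $\mathcal{A}$,
\[
\|U_m\|_{L^\infty(0,T;L^2(\Omega))}^2+\int_0^T\|\nabla U_m\|_{L^2(\Omega)}^2\le C(T),
\]
with $C(T)$ independent of $u_0,v_0$ (only the diameter of $\mathcal{A}$ enters). A duality estimate on the equation then bounds $\|\dot U_m\|_{L^2(0,T;(H^1(\Omega))^\ast)}$. Passing to the limit $m\to\infty$ (weak/weak-$\ast$ compactness plus Aubin–Lions for the needed strong $L^2(0,T;L^2(\Omega))$ convergence to handle the linear terms — which is actually immediate here since the equation is linear in $U$) gives a solution with the regularity~\eqref{regU}. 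Uniqueness follows by testing the difference of two solutions with $U$ and applying the same Gronwall estimate, using $U(0)=0$.

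Finally, for the improved estimate~\eqref{bb}, I would test the Galerkin equation (or the limit equation, at the level of formal a priori estimates justified by the Galerkin approximation) with $-\Delta U$, i.e.\ with $\dot U$ in the appropriate pairing, to produce a differential inequality for $\|\nabla U\|_{L^2(\Omega)}^2$. Concretely, testing~\eqref{E1} with $\dot U$ and integrating by parts gives $\|\dot U\|_{L^2(\Omega)}^2+\tfrac{\mathrm{d}}{\mathrm{d}t}\tfrac12\|\nabla U\|_{L^2(\Omega)}^2$ plus cross terms which, using the $H^2$ bound on $u$, boundedness of $\mu,\mu',g'$, the kernel estimate~\eqref{K3}, Gagliardo–Nirenberg to control $\|\nabla\tilde w(U)\|_{L^\infty}$ or similar, Hölder and Young, can be bounded by $C\|\nabla U\|_{L^2(\Omega)}^2+\tfrac12\|\dot U\|_{L^2(\Omega)}^2$. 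This produces $\tfrac{\mathrm{d}}{\mathrm{d}t}\|\nabla U\|_{L^2(\Omega)}^2\le\phi(t)\|\nabla U\|_{L^2(\Omega)}^2+\psi(t)$, where $\phi,\psi\in L^1_{loc}$ have integrals over $[t,t+1]$ bounded independently of $u_0,v_0$ (because the $H^2$ norm of $u$ enters only through $\int_t^{t+1}\|u\|_{H^2}^2$, controlled uniformly on $\mathcal{A}$), and $\int_t^{t+1}\|\nabla U\|_{L^2(\Omega)}^2\le C$ from the energy estimate. The Uniform Gronwall Lemma (Lemma~\ref{ugl}) then yields $\|\nabla U(t)\|_{L^2(\Omega)}\le c(t)$ for all $t>0$ with $c$ independent of $u_0,v_0$, which is~\eqref{bb}.

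The main obstacle is the rigorous justification of the $-\Delta U$ (equivalently $\dot U$) test in the last step: the coefficient $\mu'(u)\nabla w(u)$ only sits in $L^\infty$ in space but its time-derivative — which appears when integrating by parts in $\int(\mu'(u)\nabla w(u)U)\nabla\dot U$ — requires $\dot u$, hence control of $u$ in $H^1$ in time and $H^2$ in space simultaneously; this is precisely where Theorem~\ref{existence and separation}(iii) and the regularity theory of~\cite{MR} must be invoked carefully, and where one must be attentive that all constants depend on $u_0$ only through quantities bounded uniformly over the (compact, invariant) attractor $\mathcal{A}$. Everything else is routine linear parabolic theory.
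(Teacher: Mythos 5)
Your existence, uniqueness and \eqref{regU} part is essentially the paper's argument (Galerkin scheme, testing with $U$, boundedness of $\mu,\mu',g'$ together with \eqref{K3}, Gronwall, comparison in the equation for $\dot U$, and linearity for uniqueness), and that part is fine. The problem is the final step, the gradient bound \eqref{bb}. You write ``test with $-\Delta U$, i.e.\ with $\dot U$ in the appropriate pairing'' and then actually carry out the $\dot U$ test: these are not the same thing, and the $\dot U$ route does not close with the regularity you have. Testing \eqref{E1} with $\dot U$ produces the term $\int_\Omega\bigl(\mu'(u)\nabla w(u)U+\mu(u)\nabla\tilde w(U)\bigr)\cdot\nabla\dot U$, which you must treat by moving the time derivative onto the coefficients; this generates trilinear terms of the type $\int_\Omega |\dot u|\,|U|\,|\nabla U|$ (from $\mu''(u)\dot u\,\nabla w(u)\,U\cdot\nabla U$, $\mu'(u)\,\nabla(K\ast(-2\dot u))\,U\cdot\nabla U$, etc.). With the regularity available on the attractor one only has $\dot u\in L^\infty_{\mathrm{loc}}((0,\infty);L^2(\Omega))$ and $U\in H^1(\Omega)$, and for $d=3$ no admissible H\"older split of $\dot u\,U\,\nabla U$ closes the estimate (you would need $\nabla U$ in $L^3$, $U$ in $L^\infty$, or $\dot u$ in $L^3$, none of which is controlled at this stage). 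You flag exactly this as ``the main obstacle'' but do not resolve it, so as written the proof of \eqref{bb} has a genuine gap.

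The paper avoids the issue entirely by testing with $-\Delta U$ instead: then only \emph{spatial} derivatives of the coefficients appear, namely $\nabla\bigl(\mu'(u)\nabla w(u)U+\mu(u)\nabla\tilde w(U)\bigr)$, and each resulting term ($\mu''(u)\nabla w(u)U\nabla u$, $\mu'(u)\Delta w(u)U$, $\mu'(u)\nabla w(u)\nabla U$, $\mu'(u)\nabla u\,\nabla\tilde w(U)$, $\mu(u)\Delta\tilde w(U)$) is bounded by $C\Vert U\Vert_{H^1(\Omega)}$ using $0\le u\le1$, $u\in L^\infty(0,t;H^2(\Omega))$ on $\mathcal{A}$, \eqref{K3}--\eqref{K4} and $H^1(\Omega)\hookrightarrow L^4(\Omega)$; the $\Vert\Delta U\Vert_{L^2(\Omega)}^2$ on the left absorbs the Young remainder, and no $\dot u$ ever enters. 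The paper then concludes not with the Uniform Gronwall Lemma but by integrating the resulting inequality from $s$ to $t$, averaging in $s$ over $(0,t)$, and combining with the time-integrated energy estimate, which gives $t\Vert\nabla U(t)\Vert_{L^2(\Omega)}^2\le C(t)$ for every fixed $t>0$, i.e.\ \eqref{bb} with a constant independent of $u_0,v_0$. If you replace your $\dot U$ test by the $-\Delta U$ test (your Uniform-Gronwall ending would then also work, with a window $r<t$), the argument goes through.
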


\begin{proof}
The existence of solutions can be obtained by using a Galerkin approximation
scheme and by virtue of the following estimate obtained by testing the
equation with $U$:%
\begin{align}
\frac{1}{2}\frac{\mathrm{d}}{\mathrm{d}t}\left\Vert U\right\Vert
_{L^{2}(\Omega)}^{2}+\left\Vert \nabla U\right\Vert _{L^{2}(\Omega)}^{2} &
\leq\int_{\Omega}\left( \mu^{\prime}(u)\nabla w(u)U+\mu(u)\nabla\tilde
{w}(U)\right) \nabla U+g^{\prime}(u)U^{2}\nonumber\\
& \leq C\int_{\Omega}|U||\nabla U|+|\nabla\tilde{w}(U)||\nabla U|+|U|^{2} \nonumber\\
& \leq C\left\Vert U\right\Vert _{L^{2}(\Omega)}^{2}+\frac{1}{2}\left\Vert \nabla
U\right\Vert _{L^{2}(\Omega)}^{2}\text{.} \label{aa}%
\end{align}
Here we used boundedness of $\mu$, $\mu^{\prime}$, $g^{\prime}$, and
assumption (K3). From estimate (\ref{aa}) it follows, thanks to the
Gronwall lemma,
\begin{align}
\left\Vert U\right\Vert _{L^{2}(\Omega)}^{2} & \leq\left\Vert
U(0)\right\Vert _{L^{2}(\Omega)}^{2}C\mathrm{e}^{Ct}\text{,}\label{est}\\
\int_{0}^{t}\left\Vert \nabla U\right\Vert _{L^{2}(\Omega)}^{2} & \leq
Ct\mathrm{e}^{Ct}\text{,}%
\end{align}
and, by comparison in the equation~\eqref{E1}, $\dot{U}\in L^{2}(0,T;\left( H^{1}(\Omega)\right) ^{\ast}).$\\
\noindent
Uniqueness is a consequence of estimate (\ref{est}) and of the linearity of
the equation. We now prove (\ref{bb}). Fix $t>0$ and integrate (\ref{aa})
between $0$ and $t$, getting%
\begin{equation}
\frac{1}{2}\int_{0}^{t}\left\Vert \nabla U\right\Vert _{L^{2}(\Omega)}^{2}\leq
C\int_{0}^{t}\left\Vert U\right\Vert _{L^{2}(\Omega)}^{2}+\left\Vert
U(0)\right\Vert _{L^{2}(\Omega)}^{2}\text{.} \label{dd}%
\end{equation}
We now test equation~\eqref{E1} with $\mathbf{(-}\Delta U\mathbf{)}$.
Using boundedness of $u$, $\mu$, $\mu^{\prime}$, $\mu^{\prime\prime}$,
assumption (K3) and (K4), the continuous embedding of
$H^{1}(\Omega)$ into $L^{4}(\Omega)$ (that holds true for $d\leq3$), and the
fact that $u_0 \in \mathcal{A}$ implies $u\in L^{\infty}(0,t;H^{2}(\Omega))$ for all $t>0$
(cf. \cite{MR}), we obtain
\begin{align}
\frac{1}{2}\frac{\mathrm{d}}{\mathrm{d}t}\left\Vert \nabla U\right\Vert _{L^{2}(\Omega)}%
^{2}& +\left\Vert \Delta U\right\Vert _{L^{2}(\Omega)}^{2} \leq\int_{\Omega
}\nabla\left( \mu^{\prime}(u)\nabla w(u)U+\mu(u)\nabla\tilde{w}(U)\right)
(-\Delta U)+g^{\prime}(u)U(-\Delta U)\nonumber\\
& \leq\frac{1}{2}\left\Vert \Delta U\right\Vert _{L^{2}(\Omega)}%
^{2}+C\left\Vert U\right\Vert _{L^{2}(\Omega)}^{2}+C\left\Vert \nabla\left(
\mu^{\prime}(u)\nabla w(u)U+\mu(u)\nabla\tilde{w}(U)\right) \right\Vert
_{L^{2}(\Omega)}^{2}\nonumber\\
& \leq\frac{1}{2}\left\Vert \Delta U\right\Vert _{L^{2}(\Omega)}%
^{2}+C\left\Vert U\right\Vert _{L^{2}(\Omega)}^{2}+C\left\Vert \nabla
U\right\Vert _{L^{2}(\Omega)}^{2}\text{.} \label{cc}%
\end{align}
In particular, we have estimated
\begin{align}
\left\Vert \nabla\left( \mu^{\prime}(u)\nabla w(u)U +\mu(u)\nabla\tilde
{w}(U)\right) \right\Vert _{L^{2}(\Omega)}
& \leq\left\Vert \mu^{\prime\prime}(u)\nabla w(u)U\nabla u\right\Vert
_{L^{2}(\Omega)}
+\left\Vert \mu^{\prime}(u)\Delta w(u)U\right\Vert
_{L^{2}(\Omega)} \nonumber\\
& +\left\Vert \mu^{\prime}(u)\nabla w(u)\nabla U\right\Vert
_{L^{2}(\Omega)} +\left\Vert \mu^{\prime}(u)\nabla u\nabla\tilde{w}(U)\right\Vert
_{L^{2}(\Omega)} \nonumber\\
& +\left\Vert \mu(u)\Delta\tilde{w}(U)\right\Vert _{L^{2}%
(\Omega)},
\nonumber \\
\left\Vert \mu^{\prime\prime}(u)\nabla w(u)U\nabla u\right\Vert _{L^{2}%
(\Omega)} & \leq\left\Vert \mu^{\prime\prime}(u)\nabla w(u)\right\Vert
_{L^{\infty}(\Omega)}\left\Vert U\right\Vert _{L^{4}(\Omega)}\left\Vert \nabla
u\right\Vert _{L^{4}(\Omega)} \nonumber \\
& \leq C\left\Vert U\right\Vert _{H^{1}(\Omega
)}\left\Vert \nabla u\right\Vert _{H^{1}(\Omega)}\leq C\left\Vert U\right\Vert
_{H^{1}(\Omega)}\text{,} \nonumber \\
\vspace{1cm}
\left\Vert \mu^{\prime}(u)\Delta w(u)U\right\Vert _{L^{2}(\Omega)}
& \leq\left\Vert \mu^{\prime}(u)\right\Vert _{L^{\infty}(\Omega)}\left\Vert \Delta
w(u)\right\Vert _{L^{2}(\Omega)}\left\Vert U\right\Vert _{L^{2}(\Omega)} \leq C\left\Vert U\right\Vert _{H^{1}(\Omega)}\text{,} \nonumber \\
\vspace{1cm}
\left\Vert \mu^{\prime}(u)\nabla w(u)\nabla U\right\Vert _{L^{2}(\Omega)} &
\leq\left\Vert \mu^{\prime}(u)\nabla w(u)\right\Vert _{L^{\infty}(\Omega
)}\left\Vert \nabla U\right\Vert _{L^{2}(\Omega)} \leq C\left\Vert 
U\right\Vert _{H^{1}(\Omega)}\text{,} \nonumber \\
\vspace{1cm}
\left\Vert \mu^{\prime}(u)\nabla u\nabla\tilde{w}(U)\right\Vert _{L^{2}%
(\Omega)} & \leq\left\Vert \mu^{\prime}(u)\right\Vert _{L^{\infty}(\Omega
)}\left\Vert \nabla u\right\Vert _{L^{4}(\Omega)}\left\Vert \nabla\tilde
{w}(U)\right\Vert _{L^{4}(\Omega)} \nonumber \\
& \leq C\left\Vert U\right\Vert _{L^{4}%
(\Omega)} \leq C\left\Vert U\right\Vert _{H^{1}(\Omega)}\text{,} \nonumber \\
\vspace{1cm}
\left\Vert \mu(u)\Delta\tilde{w}(U)\right\Vert _{L^{2}(\Omega)} &
\leq\left\Vert \mu(u)\right\Vert _{L^{\infty}(\Omega)}\left\Vert \Delta
\tilde{w}(U)\right\Vert _{L^{2}(\Omega)} \leq C\left\Vert U\right\Vert \nonumber
_{H^{1}(\Omega)}\text{.}%
\end{align}
Note that the above estimates are just formal, however they
can be justified rigorously by mean of an approximation procedure.\\
\noindent
Integrating \eqref{cc} between $s$ and $t$ for some $0<s<t$, we get
\begin{align*}
\left\Vert \nabla U(t)\right\Vert _{L^{2}(\Omega)}^{2} & \leq\left\Vert
\nabla U(s)\right\Vert _{L^{2}(\Omega)}^{2}+C\int_{s}^{t}\left\Vert
U\right\Vert _{L^{2}(\Omega)}^{2}+C\int_{s}^{t}\left\Vert \nabla U\right\Vert
_{L^{2}(\Omega)}^{2}\\
& \leq\left\Vert \nabla U(s)\right\Vert _{L^{2}(\Omega)}^{2}+C\int_{0}%
^{t}\left\Vert U\right\Vert _{L^{2}(\Omega)}^{2}+C\int_{0}^{t}\left\Vert
\nabla U\right\Vert _{L^{2}(\Omega)}^{2}%
\end{align*}
Integrating now between $0$ and $t$ with respect to $s$, and using (\ref{dd}),
we obtain
\begin{align*}
t\left\Vert \nabla U(t)\right\Vert _{L^{2}(\Omega)}^{2} & \leq(1+Ct)\int
_{0}^{t}\left\Vert \nabla U\right\Vert _{L^{2}(\Omega)}^{2}+Ct\int_{0}%
^{t}\left\Vert U\right\Vert _{L^{2}(\Omega)}^{2}\\
& \leq(1+Ct)C\int_{0}^{t}\left\Vert U\right\Vert _{L^{2}(\Omega)}%
^{2}+\left\Vert U(0)\right\Vert _{L^{2}(\Omega)}^{2}\text{.}%
\end{align*}
The right hand side is bounded for every $t>0$ fixed. This concludes the proof
of the lemma.
\end{proof}

\bibliographystyle{plain}
\bibliography{ref_MI}{}

\end{document}